\documentclass{article}

\headheight 0pt
\textheight 22cm
\textwidth 15cm
\hoffset=-1.7cm
\voffset=-1.5cm

\usepackage[T1]{fontenc}
\usepackage{graphicx}
\usepackage{amsmath,amssymb,amsthm}
\usepackage{hyperref}
\usepackage{color}
\usepackage{array}
\usepackage{url}

\usepackage{graphics}
\usepackage{tikz}
\usetikzlibrary{arrows,decorations.pathmorphing,backgrounds,positioning,fit,petri,chains, matrix,scopes}

\newtheorem{theorem}{Theorem}
\newtheorem{lemma}{Lemma}

\newtheorem{corollary}{Corollary}
\newtheorem{proposition}{Proposition}
\newtheorem{definition}{Definition}
\newtheorem{remark}{Remark}
\newtheorem{assumption}{Assumption}

% --- Zbyszek's commands ------
\newcommand{\zTheta}[1] {\mathrm{\Theta}\left(#1\right)}
\newcommand{\BigO}[1] {O\left(#1\right)}
\newcommand{\smallO}[1] {o\left(#1\right)}

% -----------------------------

\newcommand{\cZ}{\mathcal{Z}}
\newcommand{\cA}{\mathcal{A}}
\newcommand{\cU}{\mathcal{U}}
\newcommand{\cL}{\mathcal{L}}
\newcommand{\cS}{\mathcal{S}}
\newcommand{\cK}{\mathcal{K}}
\newcommand{\cF}{\mathcal{F}}
\newcommand{\seq}{\textsc{Seq}}
\newcommand{\nti}{n\to\infty}

\title{On the number of lambda terms with prescribed size of their De Bruijn representation\footnote{This
work was partially supported by FWF grant SFB F50-03.}}

\author{
Bernhard Gittenberger \and Zbigniew Go{\l}\k{e}biewski
\thanks{Institute for Discrete
Mathematics and Geometry, Technische Universit\"at Wien, Wiedner Hauptstrasse 8-10/104,
A-1040 Wien, Austria. 
%\newline 
Supported by FWF grant SFB F50-03}
}

\begin{document}
\maketitle

\begin{abstract}
John Tromp introduced the so-called 'binary lambda calculus' as a way to encode lambda terms in terms of binary words. 
Later, Grygiel and Lescanne conjectured that the number of binary lambda terms with $m$ free
indices and of size $n$ (encoded as binary words of length $n$) is $\smallO{n^{-3/2} \tau^{-n}}$ for $\tau \approx 1.963448\ldots$.
We generalize the proposed notion of size and show that for several classes of lambda
terms, including binary lambda terms with $m$ free indices, the number of terms of size $n$ is 
$\zTheta{n^{-3/2} \rho^{-n}}$ with some class dependent constant $\rho$, which in particular
disproves the above mentioned conjecture. 
A way to obtain lower and upper bounds for the constant near the leading term is presented and 
numerical results for a few previously introduced classes of lambda terms are given.
\end{abstract}

\section{Introduction}
The objects of our interest are lambda terms which are a basic object of lambda calculus.
A lambda term is a formal expression which is described by the grammar 
$M \, ::= \, x \, | \, \lambda x . M \, | \, (M \, N)$ 
where $x$ is a variable, the operation $(M \, N)$ is called application, and using the quantifier $\lambda$ is called abstraction.
In a term of the form $\lambda x . M$ each occurrence of $x$ in $M$ is called a bound variable.
We say that a variable $x$ is free in a term $M$ if it is not in the scope of any abstraction.
A term with no free variables is called closed, otherwise open. 
Two terms are considered equivalent if they are identical up to
renaming of the variables, \emph{i.e.}, more formally speaking, they can be
transformed into each other by $\alpha$-conversion.  

In this paper we are interested in counting lambda terms whose size corresponds to their De
Bruijn representation (\emph{i.e.} nameless expressions in the sense of
\cite{deBruijn:IndMath:1972}).

\begin{definition}
A De Bruijn representation is a word described by the following specification:
\[
M \, ::= \, n \, | \, \lambda M \, | \, M \, N 
\]
where $n$ is a positive integer, called a De Bruijn index. 
Each occurrence of a De Bruijn index is called a variable and each
$\lambda$ an abstraction. A variable $n$ of a De Bruijn representation $w$ 
is bound if the prefix of $w$
which has this variable as its last symbol contains at least $n$ times the symbol $\lambda$,
otherwise it is free. The abstraction which binds a variable $n$ is the $n$th $\lambda$ before the
variable when parsing the De Bruijn representation from that variable $n$ backwards to the first symbol. 
\end{definition}

For the purpose of the analysis we will use the notation consistent with the one used in~\cite{DBLP:journals/corr/BendkowskiGLZ15}. 
This means that the variable $n$ will be represented as a sequence of $n$ symbols, namely as a
string of $n-1$ so-called 'successors' $S$ and a so-called 'zero' $0$ at the end. Obviously, there
is a one to one correspondence between equivalence classes of lambda terms (as described in the
first paragraph) and De Bruijn representations. 
For instance, the De Bruijn representation of the lambda-term 
$\lambda x . \lambda y . x y$ (which is e.g. equivalent to $\lambda a . \lambda b . a b$ or
$\lambda y . \lambda x . y x$) is $\lambda \lambda 2 1$; using the notation with successors this
becomes $\lambda \lambda ((S 0) 0)$.

In this paper we are interested in counting lambda terms of given size where we use a general
notion of size which covers several previously studied models from the literature. We count the
building blocks of lambda terms,  zeros, successors, abstractions and applications, 
with size $a, b, c$ and $d$, respectively. Formally, 
\[
|0|=a, \qquad | S n | = |n| + b, \qquad |\lambda M| = |M| + c, \qquad |M N| = |M| + |N| + d.
\]

Thus we have for the example given above
$| \lambda \lambda ((S 0) 0) | = 2 a + b + 2 c + d.$ 
Assigning sizes for the symbols like above covers several previously introduced notions of size:
\begin{itemize}
	\item so called 'natural counting' 
(introduced in~\cite{DBLP:journals/corr/BendkowskiGLZ15}) 
where $a = b = c = d = 1$,
	\item so called 'less natural counting' 
(introduced in~\cite{DBLP:journals/corr/BendkowskiGLZ15}) 
where $a = 0, b = c = 1, d = 2$.
 	\item binary lambda calculus 
(introduced in~\cite{tromp:DSP:2006:628}) 
where $b = 1, a = c = d = 2$,
\end{itemize}

\begin{assumption}\label{i:ass:1}
Throughout the paper we will make the following assumptions about the constants $a, b, c, d$:
\begin{enumerate}
	\item $a,b,c,d$ are nonnegative integers,
	\item $a+d \geq 1$,
	\item $b, c \geq 1$,
	\item $\gcd(b,c,a+d) = 1$.
\end{enumerate}
\end{assumption}

If the zeros and the applications both had size $0$ (\emph{i.e.} $a+d=0$), then we would have infinitely
many terms of the given size, because one can insert arbitrary many applications and zeros into a
term without increasing its size. If the successors or the abstractions had size $0$ (\emph{i.e.} $b$ or
$c$ equals to $0$), then we would again have infinitely many terms of given size, because one can
insert arbitrarily long strings of successors or abstractions into a term without increasing its
size. The last assumption is more technical in its nature. It ensures that the generating function
associated with the sequence of the number of lambda-terms will have exactly one singularity on
the circle of convergence. 

{\bf Notations.} 
We introduce some notations which will be frequently used throughout
the paper: If $p$ is a polynomial, then $\mathrm{RootOf}\left\{p\right\}$ will denote the
smallest positive root of $p$. Moreover, we will write $[z^n]f(z)$ for the $n$th coefficient of
the power series expansion of $f(z)$ at $z=0$ and $f(z) \prec g(z)$ (or $f(z) \preceq g(z)$) to
denote that $\left[z^n\right] f(z) < \left[z^n\right] g(z)$ (or $\left[z^n\right] f(z) \leq
\left[z^n\right] g(z)$) for all integers $n$. 

{\bf Plan of the paper.} 
The primary aim of this paper is the asymptotic enumeration of closed lambda terms of given size
with the size tending to infinity. In the next section we define several classes of lambda terms
as well as the generating function associated with them, present our main results and prove
several auxiliary results which will be important in the sequel. We derive the 
asymptotic equivalent of the number of closed terms of given size up to a constant factor. This is
established by construction of upper and lower bounds for the coefficients of the generating
functions. These constructions are done in Sections~\ref{sectionupper} and~\ref{sectionlower}. To
get fairly accurate numerical bounds we present a method for improving the previously obtained
bounds in Section~\ref{sectionimprovement}. Finally, Section~\ref{sectionprevious} is devoted to
the derivation of very accurate results for classes of lambda terms which have been previously studied in the literature.

\section{Main results}

In order to count lambda terms of a given size we set up a formal equation which is then
translated into a functional equation for generating functions. For this we will utilise the 
symbolic method developped in~\cite{Flajolet:2009:AC:1506267}. 

Let us introduce the following atomic classes: the class of zeros $\cZ$, the class of successors $\cS$, the class of abstractions $\cU$ and the class of applications $\cA$.
Then the class $\cL_{\infty}$ of lambda terms can be described as follows:
\begin{equation}\label{mr:eq:0}
\cL_{\infty} = \seq (\cS) \times \cZ + \cU  \times \cL_{\infty} + \cA \times \cL_{\infty}^2
\end{equation}
The number of lambda terms of size $n$, denoted by $L_{\infty,n}$, is $\left|\left\{t \in
\cL_{\infty}: |t|=n \right\}\right|$.
Let $L_{\infty}(z) = \sum_{n \geq 0} L_{\infty,n} z^n$ be the generating function associated with
$\cL_{\infty}$. 
Then specification~(\ref{mr:eq:0}) gives rise to a functional equation for the generating function $L_{\infty}(z)$:
\begin{equation}\label{mr:eq:1}
L_{\infty}(z) = z^a \sum_{j=0}^{\infty} z^{bj} + z^c L_{\infty}(z) + z^d L_{\infty}(z)^2.
\end{equation}
Solving \eqref{mr:eq:1} we get
\[
%\begin{equation}\label{mr:eq:2}
L_{\infty}(z) = \frac{1 - z^c - \sqrt{(1-z^c)^2 - \frac{4 z^{a+d}}{1-z^b}}}{2 z^d}, 
%\end{equation}
\]
which defines an analytic function in a neighbourhood of $z=0$. 

\begin{proposition}\label{i:prop:1}
Let $\rho = \mathrm{RootOf}\left\{(1-z^b)(1-z^c)^2 - 4 z^{a+d}\right\}$. Then
\begin{equation}\label{mr:eq:3}
L_{\infty}(z) = a_{\infty} + b_{\infty} \left(1 - \frac{z}{\rho}\right)^{\frac12} + \BigO{\left|1
- \frac{z}{\rho}\right|},
\end{equation}
for some constants $a_{\infty} > 0, b_{\infty} < 0$ that depend on $a,b,c,d$.
\end{proposition}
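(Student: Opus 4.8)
The plan is to read off the singular behaviour directly from the closed form of $L_{\infty}(z)$, locating the dominant singularity at the branch point of the square root and showing that this branch point is a \emph{simple} zero of the radicand, which is exactly what produces a square-root singularity. First I would clear the denominator inside the radical. Writing
\[
D(z) := (1-z^c)^2 - \frac{4z^{a+d}}{1-z^b} = \frac{P(z)}{1-z^b}, \qquad P(z) := (1-z^b)(1-z^c)^2 - 4z^{a+d},
\]
so that $L_{\infty}(z) = \frac{1-z^c}{2z^d} - \frac{1}{2z^d}\sqrt{D(z)}$ and $\rho = \mathrm{RootOf}\{P\}$. Since $P(0)=1>0$ while $P(1) = -4 < 0$ (using $a+d\ge 1$), the intermediate value theorem gives a root in $(0,1)$, so $0<\rho<1$; in particular $1-\rho^b>0$ and $\rho^d>0$. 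Because $\rho$ is the smallest positive root, $P(z)>0$ on $(0,\rho)$, hence $D(z)>0$ there, confirming that the chosen branch of the square root is real and positive on $(0,\rho)$.

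The key step is to show that $\rho$ is a simple zero of $P$. Differentiating,
\[
P'(z) = -b z^{b-1}(1-z^c)^2 - 2c z^{c-1}(1-z^b)(1-z^c) - 4(a+d)z^{a+d-1}.
\]
For $z\in(0,1)$ each of the three summands is nonpositive (here $1-z^b>0$, $1-z^c>0$ and $b,c,a+d\ge 1$), and the last one is strictly negative since $a+d\ge 1$. Hence $P'(z)<0$ throughout $(0,1)$, and in particular $P'(\rho)<0$. Consequently $\rho$ is a simple root, and I can factor $D(z)=\bigl(1-\tfrac{z}{\rho}\bigr)H(z)$ with
\[
H(z) = \frac{P(z)}{(1-z^b)\bigl(1-\tfrac{z}{\rho}\bigr)}
\]
analytic and nonvanishing in a full complex neighbourhood of $z=\rho$, where a routine limit gives $H(\rho) = \frac{-\rho P'(\rho)}{1-\rho^b}>0$.

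It then remains to assemble the local expansion. The rational prefactor $\frac{1-z^c}{2z^d}$ is analytic at $\rho$ (as $\rho>0$), so it equals $a_{\infty}+\BigO{1-\frac{z}{\rho}}$ with $a_{\infty}=\frac{1-\rho^c}{2\rho^d}>0$ (because $0<\rho<1$ and $c\ge 1$). For the radical, $\sqrt{D(z)} = \bigl(1-\tfrac{z}{\rho}\bigr)^{1/2}\sqrt{H(z)}$ with $\sqrt{H(z)}$ analytic at $\rho$ (choosing the branch with $\sqrt{H(\rho)}>0$); dividing by the analytic nonzero factor $2z^d$ and expanding $\sqrt{H(z)}/(2z^d)$ to first order yields
\[
\frac{\sqrt{D(z)}}{2z^d} = -b_{\infty}\Bigl(1-\tfrac{z}{\rho}\Bigr)^{1/2} + \BigO{\Bigl|1-\tfrac{z}{\rho}\Bigr|^{3/2}}, \qquad b_{\infty} = -\frac{1}{2\rho^d}\sqrt{\frac{-\rho P'(\rho)}{1-\rho^b}}<0.
\]
Subtracting the two contributions gives exactly \eqref{mr:eq:3}, the $\BigO{|1-z/\rho|}$ error of the prefactor dominating the $\BigO{|1-z/\rho|^{3/2}}$ term of the radical, and the expansion is valid in a neighbourhood of $\rho$ slit along $[\rho,\infty)$ as required for later singularity analysis. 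I do not expect a serious obstacle here: the only delicate point is the simplicity of the branch point, which the sign computation for $P'$ settles cleanly, and all remaining factors are analytic at the positive point $\rho$.
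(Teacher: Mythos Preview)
Your proof is correct and follows essentially the same route as the paper: both arguments hinge on computing $P'(z)$ and observing that each term is negative on $(0,1)$, so the branch point at $\rho$ is a simple zero of the radicand and the singularity is of square-root type. You are more explicit than the paper, giving closed forms for $a_{\infty}$ and $b_{\infty}$ and deducing their signs from these formulas, whereas the paper infers the signs directly from the positivity of the power-series coefficients of $L_{\infty}$.
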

\begin{proof}
Let $f(z) = (1-z^b)(1-z^c)^2 - 4 z^{a+d}$. Then $\rho$ is the solution of $f(z) = 0$.
If we compute derivative $f'(z) = -4 (a + b) z^{a+b-1} - 2 c z^{c-1} (1 - z^b) (1 - z^c) - b
z^{b-1} (1 - z^c)^2$ we can observe that all three terms are negative for $0 < z < 1$.
Since $0 < \rho < 1$, the function $L_{\infty}(z)$ has an algebraic singularity of type $\frac12$
which means that its Newton-Puiseux expansion is of the form~(\ref{mr:eq:3}).

Since $L_{\infty}(z)$ is a power series with positive coefficients, we know that $a_{\infty} =
L_{\infty}(\rho) > 0$ and $b_{\infty} < 0$. 
\end{proof}

\begin{corollary}
The coefficients of $L_\infty(z)$ satisfy $[z^n]L_\infty(z)\sim C\rho^{-n}n^{-3/2}$, as $\nti$,
where $C=-b_\infty /(2\sqrt\pi)$. 
\end{corollary}

Let us define the class of $m$-open lambda terms, denoted $\cL_m$, as
\[
\cL_{m} = \left\{t \in \cL_{\infty}: \textrm{a prefix of at most } m \textrm{ abstractions } \lambda \textrm{ is needed to close the term}\right\}.
\]
The number of $m$-open lambda terms of size $n$ is denoted by $L_{m, n}$ and the generating
function associated with the class by $L_{m}(z) = \sum_{n \geq 0} L_{m,n} z^n$. 
Similarly to $\cL_{\infty}$, the class $\cL_{m}$ can be specified, and 
this specification yields the functional equation 
\begin{equation}\label{mr:eq:5}
L_{m}(z) = z^a \sum_{j=0}^{m-1} z^{bj} + z^c L_{m+1}(z) + z^d L_{m}(z)^2
\end{equation}
Note that $L_0(z)$ is the generating function of the set $\mathcal{L}_0$ of closed lambda terms.

Let $\cK_m = \cL_{\infty} \setminus \cL_{m}$ and $K_{m}(z) = L_{\infty}(z) - L_{m}(z)$. Then using
\eqref{mr:eq:1}~and~\eqref{mr:eq:5} we obtain 
\begin{equation}\label{mr:eq:6}
K_{m}(z) = z^a \sum_{j=m}^{\infty} z^{bj} + z^c K_{m+1}(z) + z^d K_{m}(z) L_{\infty}(z) + z^d K_{m}(z) L_{m}(z).
\end{equation}
which implies 
\begin{equation}\label{mr:eq:7}
K_{m}(z) = \frac{z^{a+bm}}{(1-z^b)(1 - z^d (L_{\infty}(z) + L_{m}(z)))} + \frac{z^c}{1 - z^d (L_{\infty}(z) + L_{m}(z))}{} K_{m+1}(z).
\end{equation}
Note that $K_m(z)$ as well as $L_m(z)$ define analytic functions in a neighbourhood of $z=0$. 

Let us state the main theorem of the paper:
\begin{theorem}\label{thm:1}
Let $\rho = \mathrm{RootOf}\left\{(1-z^b)(1-z^c)^2 - 4 z^{a+d}\right\}$. Then there exist positive
constants $\underline C$ and $\overline C$ (depending on $a, b, c, d$ and $m$) such that the number of $m$-open lambda terms of size
$n$ satisfies
\begin{equation}
\liminf_{n \to \infty} \frac{\left[z^n\right] L_{m}(z)}{\underline{C} n^{-\frac32} \rho^{-n}} \geq 1
\quad \textrm{and} \quad
\limsup_{n \to \infty} \frac{\left[z^n\right] L_{m}(z)}{\overline{C} n^{-\frac32} \rho^{-n}} \leq 1,
\end{equation}
\end{theorem}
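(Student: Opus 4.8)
The plan is to sandwich the coefficients $[z^n]L_m(z)$ between two functions whose coefficients are both of order $\rho^{-n}n^{-3/2}$ and then read off the constants by singularity analysis. The upper bound is immediate: since $\cL_m\subseteq\cL_\infty$ we have $L_m(z)\preceq L_\infty(z)$, hence $[z^n]L_m(z)\le[z^n]L_\infty(z)$, and the Corollary gives $[z^n]L_\infty(z)\sim C\rho^{-n}n^{-3/2}$. Taking $\overline C=C$ therefore yields $\limsup_{\nti}[z^n]L_m(z)/(\overline C\,n^{-3/2}\rho^{-n})\le 1$. All the substance is in the lower bound, and for it it even suffices to treat $m=0$, since $L_0\preceq L_m$. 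Concretely, I want to produce an explicit $g_m(z)\preceq L_m(z)$ that is analytic in a $\Delta$-domain at $\rho$ with Newton--Puiseux expansion $g_m(z)=A_m+B_m\sqrt{1-z/\rho}+\BigO{|1-z/\rho|}$ and $B_m<0$; the transfer theorem then gives $[z^n]g_m(z)\sim\frac{-B_m}{2\sqrt\pi}\rho^{-n}n^{-3/2}$, so that $\underline C=-B_m/(2\sqrt\pi)$ works.

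To build $g_m$ I would work with $K_m(z)=L_\infty(z)-L_m(z)$ and construct an explicit supersolution. Two structural facts feed the construction. First, since $\cK_m$ is non-empty we have $K_m(\rho)>0$, so $L_m(\rho)=a_\infty-K_m(\rho)<a_\infty$ strictly; consequently the multiplier $z^c/(1-z^d(L_\infty(z)+L_m(z)))$ in \eqref{mr:eq:7} is strictly below $1$ at $z=\rho$, which is what keeps $K_m$ and its continuation finite there. Second, from the explicit formula one has $1-2z^dL_\infty(z)=z^c+\sqrt{D(z)}$ with $D(z)=(1-z^c)^2-4z^{a+d}/(1-z^b)$, and $D$ has a simple zero at $\rho$, so $\sqrt D=\kappa\sqrt{1-z/\rho}+\BigO{|1-z/\rho|}$ with $\kappa>0$ and $b_\infty=-\kappa/(2\rho^d)$. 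Rewriting \eqref{mr:eq:6} as $K_m=\tfrac{z^{a+bm}}{1-z^b}+z^cK_{m+1}+2z^dL_\infty K_m-z^dK_m^2$ and discarding the negative term $-z^dK_m^2$ gives, by monotonicity of the positive-coefficient operator, a supersolution; solving the resulting linear system in $m$ in closed form yields the candidate
\[
\hat c_m(z)=\frac{z^{a+bm}}{(1-z^b)\bigl(z^c(1-z^b)+\sqrt{D(z)}\bigr)}\succeq K_m(z),\qquad g_m:=L_\infty-\hat c_m\preceq L_m .
\]
The function $\hat c_m$ is $\Delta$-analytic at $\rho$ (its only nearby singularity is $\sqrt D$), with $\hat c_m(\rho)=\rho^{a+bm}/(\rho^c(1-\rho^b)^2)$ finite, and a routine expansion gives its leading singular coefficient $\beta_m=-\rho^{a+bm}\kappa/(\rho^{2c}(1-\rho^b)^3)<0$. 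Then $B_m=b_\infty-\beta_m$, and $[z^n]g_m\sim\frac{\beta_m-b_\infty}{2\sqrt\pi}\rho^{-n}n^{-3/2}$ is a genuine positive lower bound \emph{provided} $\beta_m>b_\infty$.

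The main obstacle is precisely this strict inequality: one must guarantee that subtracting $\hat c_m$ does not over-cancel the square-root singularity of $L_\infty$. The crude supersolution is in general too lossy here — for natural counting with $m=0$ one checks that $\beta_0>b_\infty$ \emph{fails} — so the actual argument needs a \emph{tighter} supersolution of the $K$-system, obtained by retaining a bound on the nonlinear term $-z^dK_m^2$ instead of discarding it. The cleanest conceptual route, which I would ultimately present, is to insert the ansatz $L_m=a_m+b_m\sqrt{1-z/\rho}+\BigO{|1-z/\rho|}$ into \eqref{mr:eq:5} and match $\sqrt{1-z/\rho}$-coefficients, giving the recurrence $b_m=\frac{\rho^c}{1-2\rho^d a_m}\,b_{m+1}$. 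Since $a_m=L_m(\rho)<a_\infty$ forces each factor $\rho^c/(1-2\rho^d a_m)=\rho^c/(\rho^c+\rho^dK_m(\rho))$ into $(0,1)$, and since $\sum_k(a_\infty-a_k)=\sum_k K_k(\rho)<\infty$ (the terms decay geometrically like $\rho^{bk}$, as \eqref{mr:eq:7} shows), the product $b_m=b_\infty\prod_{k\ge m}\rho^c/(1-2\rho^d a_k)$ converges to a strictly negative value, so $B_m=b_m<0$. The remaining technical point — and the reason a dedicated argument is needed rather than a one-line appeal to an implicit-function theorem — is to legitimate this ansatz uniformly for the \emph{infinite} coupled system \eqref{mr:eq:5}, i.e.\ to certify that each $L_m$ is truly $\Delta$-analytic at $\rho$ with such an expansion; this is exactly what the explicit sandwiching functions $g_m\preceq L_m\preceq L_\infty$ are used to establish.
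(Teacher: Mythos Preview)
Your upper bound is correct and in fact simpler than the paper's: the inclusion $\cL_m\subseteq\cL_\infty$ already gives $[z^n]L_m(z)\le[z^n]L_\infty(z)\sim C\rho^{-n}n^{-3/2}$, so $\overline C=C$ works. The paper takes a more elaborate route here (bounding $K_m$ from below by $K_m^{(h)}$), but only because this intermediate object is reused to set up the lower bound.

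The lower bound, however, has a genuine gap. You correctly build the crude supersolution $\hat c_m\succeq K_m$ and correctly observe that it is \emph{too lossy}: for small $m$ (in particular $m=0$ in natural counting) the singular coefficient of $L_\infty-\hat c_m$ has the wrong sign, so no positive $\underline C$ comes out. At that point you switch to an ansatz $L_m=a_m+b_m\sqrt{1-z/\rho}+O(|1-z/\rho|)$ and derive the multiplicative recursion for $b_m$. The convergence argument for the infinite product is fine (indeed $K_k(\rho)\le\hat c_k(\rho)$ decays geometrically), but the whole manoeuvre presupposes that each $L_m$ actually \emph{has} such a Puiseux expansion at $\rho$. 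Your closing sentence acknowledges this and says it is ``exactly what the explicit sandwiching functions $g_m\preceq L_m\preceq L_\infty$ are used to establish'' --- but the only $g_m$ you have produced is $L_\infty-\hat c_m$, which you have just shown does not do the job. So the argument is circular: you need a working sandwich to justify the ansatz, and you invoke the ansatz because the sandwich failed.

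The paper closes this gap by a two-pass bootstrapping that you do not mention. First one replaces $L_m$ by the truncated $L_m^{(h)}\preceq L_m$ inside the $K$-equation, producing an explicit \emph{sub}solution $K_m^{(h)}\preceq K_m$ with a computable square-root expansion at $\rho$. Then, in the second pass, one plugs $L_m^{(h,H)}:=L_\infty-K_m^{(h)}\succeq L_m$ (not merely $L_\infty$) back into the $K$-equation. The point is that now the dangerous multiplier becomes
\[
\frac{z^c}{1-z^d\bigl(2L_\infty(z)-K_i^{(h)}(z)\bigr)}\;=\;\frac{z^c}{z^c+\sqrt{D(z)}+z^d K_i^{(h)}(z)},
\]
which at $z=\rho$ equals $\rho^c/(\rho^c+\rho^d K_i^{(h)}(\rho))<1$ \emph{strictly}, because $K_i^{(h)}(\rho)>0$. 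This strict contraction is exactly what your crude $\hat c_m$ lacked (there the multiplier was $z^c/(z^c+\sqrt D)$, equal to $1$ at $\rho$), and it is what keeps the singular coefficient of the resulting supersolution $K_m^{(h,H)}$ from overwhelming $b_\infty$ once $h,H$ are taken large. In short: the missing idea is to feed a nontrivial lower bound on $K_m$ back into the equation to obtain a sufficiently tight upper bound on $K_m$.
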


\begin{remark}
In case of given $a, b, c, d$ and $m$ we can compute numerically such constants $\underline C$ and
$\overline C$. This will be done for some of the models mentioned in the introduction.
\end{remark} 

Before proving this theorem we will present the key ideas needed for our proof. We introduce the class $\cL_{m}^{(h)}$ of lambda terms in $\cL_{m}$ where the length of each 
string of successors is bounded by a constant integer $h$. As before, set $L_{m,n}^{(h)} =
\left|\left\{t \in \cL_{m}^{(h)}: |t|=n \right\}\right|$ and $L_{m}^{(h)}(z) = \sum_{n \geq 0}
L_{m,n}^{(h)} z^n$. Then $L_{m}^{(h)}(z)$ satisfies the functional equation
\begin{equation}\label{mr:eq:9}
L_{m}^{(h)}(z) = 
\begin{cases} 
z^a \sum_{j=0}^{m-1} z^{bj} + z^c L_{m+1}^{(h)}(z) + z^d L_{m}^{(h)}(z)^2 & \textrm{if } m < h,
\\
z^a \sum_{j=0}^{h-1} z^{bj} + z^c L_{h}^{(h)}(z) + z^d L_{h}^{(h)}(z)^2 & \textrm{if } m \geq h.
\end{cases}
\end{equation}
Notice that for $m \geq h$ we have a quadratic equation for $L_m^{(h)}(z)=L_{h}^{(h)}(z)$ 
that has the solution
\[
L_{h}^{(h)}(z) = \frac{1-z^c-\sqrt{(1-z^c)^2 - 4 z^{a+d} \frac{1-z^{b h}}{1-z^b}}}{2 z^d}.
\]
For $m < h$ we have a relation between $L_{m}^{(h)}(z)$ and $L_{m+1}^{(h)}(z)$ which gives rise to a
representation of $L_m^{(h)}(z)$ in terms of nested radicands
(\emph{cf.}~\cite{DBLP:conf/analco/BodiniGG11}) after all. Indeed, for $m < h$ we have
\begin{equation}\label{nestedrad}
L_{m}^{(h)}(z) = 
\frac{
1 - \sqrt{r_m(z) + 2z^c \sqrt{r_{m+1}(z) + 2z^c \sqrt{\cdots \sqrt{r_{h-1}(z) + 2z^c \sqrt{r_h(z)}}}}}
}
{2z^d}
\end{equation}
where
\begin{equation*}
r_{j}(z) = 
\begin{cases}
1 - 4z^{a+d} \frac{1 - z^{j b}}{1-z^b} - 2z^c & \textrm{if } m \leq j < h-1, \\
1 - 4z^{a+d} \frac{1-z^{(h-1)b}}{1-z^b} - 2z^c + 2z^{2c} & \textrm{if } j = h-1, \\
(1-z^c)^2 - 4 z^{a+d} \frac{1-z^{b h}}{1-z^b} & \textrm{if } j = h.
\end{cases}
\end{equation*}
One can check that the dominant singularity $\rho_{m}^{(h)}$ of $L_m^{(h)}(z)$ comes from the real
root of $r_h(z)$ which is closest to the origin and that it is of type $\frac12$ (\emph{i.e.} 
$L_m^{(h)}(z) = a_{m}^{(h)} + b_{m}^{(h)} \left(1 - \frac{z}{\rho^{(h)}}\right)^{\frac12} +
\BigO{\left|1 - \frac{z}{\rho^{(h)}}\right|}$ as $z \to \rho_{m}^{(h)}$ for some constants $a_{m}^{(h)}, b_{m}^{(h)}$ depending on $m$ and $h$).
Notice that for all $m,k \geq 0$ we have $\rho_{m}^{(h)} = \rho_{k}^{(h)}$ and that is why we can drop the lower index and write just $\rho^{(h)}$ instead of $\rho_{m}^{(h)}$.
Moreover, $\rho^{(h)} > \rho$ as well as $\lim_{h \to \infty} \rho^{(h)} = \rho$.

Let us begin with computing the radii of convergence of the functions $K_m(z)$ and $L_m(z)$. For
the case of binary lambda calculus Lemmas~\ref{l:01}~and~\ref{l:1} were already proven 
in~\cite{GLBinLT}. To extend those results to our more general setting, we will use different
techniques. 

\begin{lemma}\label{l:0}
For all $m \geq 0$ the radius of convergence of $K_{m}(z)$ equals $\rho$ (the radius of
convergence of $L_{\infty}(z)$).
\end{lemma}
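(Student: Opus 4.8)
The plan is to pin down the exponential growth rate of the coefficients $[z^n]K_m(z)$ by sandwiching them between two sequences whose growth rates are both governed by $\rho$. The starting observation is that $K_m(z)$ is a power series with nonnegative coefficients: it is the counting series of the combinatorial class $\cK_m = \cL_\infty \setminus \cL_m$, so $[z^n]K_m(z) = L_{\infty,n} - L_{m,n} \geq 0$. Consequently its radius of convergence coincides with the location of its dominant real singularity, and it suffices to control $\left[z^n\right]K_m(z)$ from above and from below.

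For the upper bound I would simply note that $L_m(z)$ also has nonnegative coefficients, so $K_m(z) = L_\infty(z) - L_m(z) \preceq L_\infty(z)$. Since Proposition~\ref{i:prop:1} and the corollary following it give $[z^n]L_\infty(z) \sim C\rho^{-n}n^{-3/2}$, this yields $\limsup_n \left([z^n]K_m(z)\right)^{1/n} \le 1/\rho$, i.e.\ the radius of convergence of $K_m(z)$ is at least $\rho$.

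The substantive direction is the matching lower bound, and here I would argue combinatorially rather than through the functional equation~\eqref{mr:eq:7}. The idea is to exhibit an injection $\cL_\infty \hookrightarrow \cK_m$ that shifts the size by a fixed amount. Concretely, map a term $t \in \cL_\infty$ to the application $(S^m 0)\,t$, i.e.\ apply the free variable whose De Bruijn index is $m+1$ (the string of $m$ successors followed by a zero) to $t$. This left-hand variable sits at depth $0$ (under no abstraction), so closing the resulting term requires at least $m+1 > m$ leading abstractions; hence $(S^m 0)\,t \in \cK_m$ regardless of $t$. The map is injective because the fixed left subterm determines $t$, and it increases the size by $|S^m 0| + d = a + bm + d$. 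Therefore $z^{a+bm+d}L_\infty(z) \preceq K_m(z)$, and so $[z^n]K_m(z) \ge [z^{\,n-(a+bm+d)}]L_\infty(z)$. Because $[z^j]L_\infty(z)$ grows like $\rho^{-j}$, this forces $\limsup_n \left([z^n]K_m(z)\right)^{1/n} \ge 1/\rho$, i.e.\ the radius is at most $\rho$. Combining the two bounds gives radius exactly $\rho$.

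The main obstacle --- really the only nonroutine point --- is getting the combinatorial injection right against the precise De Bruijn semantics of ``number of abstractions needed to close'' from the Definition: one has to make sure that prepending a top-level free variable of index $m+1$ genuinely produces a term outside $\cL_m$ (and not merely outside $\cL_{m'}$ for some smaller $m'$), and that the size bookkeeping $a+bm+d$ is exact. Once this construction is verified, the rest is a one-line comparison of coefficients; no analysis of the nested-radical representation~\eqref{nestedrad} or of $L_m(z)$ itself is needed.
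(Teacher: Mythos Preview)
Your proof is correct and takes a genuinely different route from the paper's. The paper works analytically through the functional equation~\eqref{mr:eq:7}: it observes that the factor $1/(1 - z^d(L_\infty(z)+L_i(z)))$ is a sequence-schema generating function, checks that it is \emph{not} supercritical because $\rho^d(L_\infty(\rho)+L_i(\rho)) \le 2\rho^d L_\infty(\rho) = 1-\rho^c < 1$, and concludes that the dominant singularity of $K_m$ is inherited from $L_\infty(z)$ at $\rho$ rather than from a vanishing denominator. You bypass~\eqref{mr:eq:7} entirely and produce a coefficient-wise sandwich $z^{a+bm+d}L_\infty(z) \preceq K_m(z) \preceq L_\infty(z)$ via the explicit injection $t \mapsto (S^m 0)\, t$ into $\cK_m$.

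What each approach buys: your injection is more elementary and self-contained --- it needs no analysis of~\eqref{mr:eq:7}, no sequence schema, and no information about $L_m(z)$ at all. The paper's argument, on the other hand, establishes along the way that $1 - \rho^d(L_\infty(\rho)+L_m(\rho)) > 0$, and this non-supercriticality fact is precisely what is reused in the singular expansions of $K_m^{(h)}(z)$ (Lemma~\ref{l:2}) and $K_m^{(h,H)}(z)$ (Lemma~\ref{l:3}). So while your proof settles Lemma~\ref{l:0} more cleanly, the denominator computation would still have to be done somewhere downstream.
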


\begin{proof}
Inspecting (\ref{mr:eq:7}) reveals that the key part is $\frac{1}{1 - z^d (L_{\infty}(z) +
L_{i}(z))}$. This is the generating function of a sequence of combinatorial structures associated
with the generating function $z^d (L_{\infty}(z) + L_{i}(z))$. 
One can check that we are not in the supercritical sequence schema case (\emph{i.e.} a singularity
of considered fraction does not come from the root of its denominator,
see~\cite[pp.~293]{Flajolet:2009:AC:1506267}) because $1 - \rho^d \left(L_{\infty}(\rho) +
L_i(\rho)\right) > 0$. This follows from 
\[
\rho^d \left(L_{\infty}(\rho) + L_i(\rho)\right) \leq 2 \rho^d L_{\infty}(\rho) = 1 - \rho^c < 1.
\]
The first inequality holds because $L_{\infty}(\rho) \geq L_i(\rho)$ for all $i\ge 0$ and the
second one because $\rho > 0$. 
Moreover, the radius of convergence of $L_i(z)$ is larger than or equal to the radius of convergence of $L_{\infty}(z)$ because $\cL_i \subseteq \cL_{\infty}$.
Therefore, for all $m \geq 0$ the radius of convergence of $K_{m}(z)$ equals $\rho$, the radius of convergence of $L_{\infty}(z)$.
\end{proof}

\begin{lemma}\label{l:01}
All the functions $L_{m}(z)$, $m \geq 0$, have the same radius of convergence.
\end{lemma}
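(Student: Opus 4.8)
The plan is to prove the statement by showing that the radius of convergence is unchanged when the index $m$ is incremented; since this will hold for every $m\ge 0$, all the functions $L_m(z)$ must share one common radius. Write $\rho_m$ for the radius of convergence of $L_m(z)$. The equality $\rho_m=\rho_{m+1}$ will be obtained as a two-sided sandwich, with the two inequalities coming from genuinely different sources: a combinatorial inclusion for one direction and the functional equation~\eqref{mr:eq:5} for the other.

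First I would establish $\rho_m\ge\rho_{m+1}$ from monotonicity. A term that can be closed by a prefix of at most $m$ abstractions can a fortiori be closed by a prefix of at most $m+1$, so $\cL_m\subseteq\cL_{m+1}$, whence $L_{m,n}\le L_{m+1,n}$ for all $n$, that is $L_m(z)\preceq L_{m+1}(z)$. Coefficientwise domination gives $\limsup_n L_{m,n}^{1/n}\le\limsup_n L_{m+1,n}^{1/n}$, and hence $\rho_m\ge\rho_{m+1}$. The same comparison against $L_\infty(z)$, using $\cL_m\subseteq\cL_\infty$ together with Proposition~\ref{i:prop:1}, shows moreover that $\rho_m\ge\rho$, so the common radius is at least $\rho$.

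For the reverse inequality I would rearrange~\eqref{mr:eq:5} as
\[
z^c L_{m+1}(z)=L_m(z)-z^a\sum_{j=0}^{m-1}z^{bj}-z^d L_m(z)^2 .
\]
The right-hand side is a fixed combination of $L_m(z)$ with polynomials, hence analytic on the disk $|z|<\rho_m$, so its Taylor series at the origin converges on that disk. But that series is exactly $\sum_{n} L_{m+1,n}z^{n+c}$, and therefore $\sum_{n} L_{m+1,n}z^{n}$ converges for $|z|<\rho_m$ as well, giving $\rho_{m+1}\ge\rho_m$. Combining the two paragraphs yields $\rho_{m+1}=\rho_m$ for every $m\ge 0$, and the lemma follows.

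The step that needs the most care is this last one: one must verify that inverting the recursion does not artificially enlarge the radius. The only place this could fail is the apparent pole of $z^{-c}$ at the origin, but since $L_{m+1}(z)$ is a genuine power series that singularity is removable, so no spurious growth is introduced. I would also emphasise why the soft route via $K_m(z)=L_\infty(z)-L_m(z)$ and Lemma~\ref{l:0} does \emph{not} suffice on its own: it yields only $\rho_m\ge\rho$, because an exact cancellation of the square-root singularity of $L_\infty(z)$ against that of $K_m(z)$ is a priori conceivable, leaving $\rho_m>\rho$ logically open. This is precisely why the recursion-based sandwich is required here, and why the identification of the common radius with $\rho$ itself is left to the finer singularity analysis underlying Theorem~\ref{thm:1}.
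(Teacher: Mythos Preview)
Your proof is correct and follows essentially the same route as the paper: the inequality $\rho_m\ge\rho_{m+1}$ comes from the coefficientwise inclusion $\cL_m\subseteq\cL_{m+1}$, and the reverse inequality from solving the functional equation~\eqref{mr:eq:5} for $L_{m+1}$ in terms of $L_m$. Your formulation, keeping the factor $z^c$ on the left side and arguing directly that the right side is analytic on $|z|<\rho_m$, is arguably tidier than the paper's version, which divides through by $z^c$ and then invokes $\rho_m\le\rho^{(h)}<1$; your added remark that the cancellation-via-$K_m$ route only yields $\rho_m\ge\rho$ is also a nice clarification of why this separate lemma is needed before Lemma~\ref{l:1}.
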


\begin{proof}
Let $\rho_m$ denote the radius of convergence of the function $L_{m}(z)$.
From the definition of the function $L_{m}(z)$ it is known that for all $m \geq 0$ and for all $n$ we have $\left[z^n\right] L_{m}(z) \leq \left[z^n\right] L_{m+1}(z)$ and therefore $\rho_m \geq \rho_{m+1}$.
Moreover, from (\ref{mr:eq:5}) we know
\[
L_{m+1}(z) = - z^{a-c} \sum_{j=0}^{m-1} z^{bj} + z^{-c} L_{m}(z) - z^{d-c} L_{m}(z)^2.
\]
Notice that $\rho_m \leq 1$ because $\rho_m \leq \rho^{(h)} < 1$ for $h \geq m$.
Then due to the fact that $z^{-c} L_{m}(z) - z^{d-c} L_{m}(z)^2$ has radius of convergence bigger
or equal $\rho_m$, we have $\rho_{m} \leq \rho_{m+1}$.
\end{proof}

\begin{lemma}\label{l:1}
For all $m \geq 0$ the radius of convergence of $L_{m}(z)$ equals $\rho$.
\end{lemma}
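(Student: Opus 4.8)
The plan is to sandwich the radius of convergence $\rho_m$ of $L_m(z)$ between $\rho$ and the radii $\rho^{(h)}$ of the truncated classes, and then let $h\to\infty$. Throughout I will use the elementary principle that for power series with nonnegative coefficients, coefficientwise domination $f(z)\preceq g(z)$ forces the radius of convergence of $f$ to be at least that of $g$ (immediate from Cauchy--Hadamard, since $\limsup_n ([z^n]f)^{1/n}\le\limsup_n ([z^n]g)^{1/n}$). I intend to apply this in both directions.

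For the lower bound, the inclusion $\cL_m\subseteq\cL_\infty$ gives $L_m(z)\preceq L_\infty(z)$, hence $\rho_m\ge\rho$. For the upper bound, I would exploit the approximating classes $\cL_m^{(h)}$: bounding the length of every successor string by $h$ only removes terms, so $\cL_m^{(h)}\subseteq\cL_m$ and therefore $L_m^{(h)}(z)\preceq L_m(z)$, which yields $\rho_m\le\rho^{(h)}$ for every $h$. The excerpt already records that the dominant singularity of $L_m^{(h)}$ is the relevant real root of $r_h(z)$, that $\rho^{(h)}>\rho$, that $\rho^{(h)}\to\rho$ as $h\to\infty$, and that $\rho^{(h)}$ does not depend on $m$. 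Letting $h\to\infty$ then gives $\rho_m\le\rho$, and combining the two bounds yields $\rho_m=\rho$ for every $m\ge 0$. (Alternatively, by Lemma~\ref{l:01} all the $L_m$ share one radius, so it would suffice to establish the equality for a single value of $m$.)

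The main conceptual point --- and the step one has to get right --- is that the tempting shortcut via $L_m(z)=L_\infty(z)-K_m(z)$ does not settle the matter on its own. Both $L_\infty$ and, by Lemma~\ref{l:0}, $K_m$ have radius of convergence exactly $\rho$, so a priori their difference could have its singularities cancel and enjoy a strictly larger radius; ruling this out directly would require comparing the singular expansions of $L_\infty$ and $K_m$ at $\rho$. The approximation argument sidesteps this entirely: the lower bound comes from the single ambient inclusion $\cL_m\subseteq\cL_\infty$, and the upper bound from the family of inner approximations $\cL_m^{(h)}\subseteq\cL_m$ whose radii $\rho^{(h)}$ are controlled and converge to $\rho$. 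The only ingredients I rely on are the monotonicity of radii under $\preceq$, these two inclusions, and the limiting behaviour of $\rho^{(h)}$ already established above.
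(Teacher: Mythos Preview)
Your proof is correct and follows essentially the same sandwiching strategy as the paper: the inclusions $\cL_m^{(h)}\subseteq\cL_m\subseteq\cL_\infty$ give $\rho\le\rho_m\le\rho^{(h)}$, and the limit $\rho^{(h)}\to\rho$ closes the gap. The only cosmetic difference is that the paper takes the diagonal choice $h=m$, obtaining merely $\rho_m\le\rho^{(m)}$ for each $m$, and then invokes Lemma~\ref{l:01} to transfer the limit back to a fixed index; by contrast you keep $m$ fixed and let $h\to\infty$ directly, which makes Lemma~\ref{l:01} unnecessary (as you yourself note parenthetically).
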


\begin{proof}
Take $L_{m}^{(m)}$, defined in~\eqref{mr:eq:9}.
Recall that $\rho^{(m)}, \rho_m$ and $\rho$ denote the radii of convergence of $L_{m}^{(m)}(z),
L_{m}(z)$ and $L_{\infty}(z)$, respectively.
Notice that for all $m, n \geq 0$ we have $\left[z^n\right] L_{m}^{(m)}(z) \leq \left[z^n\right] L_{m}(z) \leq \left[z^n\right] L_{\infty}(z) $. 
Thus we also have $\rho^{(m)} \geq \rho_m \geq \rho$.
Since $\rho^{(m)}$ is the smallest positive solution of the equation $(1-z)^c - 4z^{a+d}
\frac{1-z^{bm}}{1-z^b} = 0$, we have $\lim_{m \to \infty} \rho^{(m)} = \rho$, therefore
Lemma~\ref{l:01} implies $\rho_{m} = \rho$.
\end{proof}

In the next sections we will present how to obtain an upper and a lower bound for
$\left[z^n\right] L_{m}(z)$. 
The idea is to construct auxiliary functions satisfying certain inequalities and to use them to
construct further ones until we have the desired bound. The procedure follows the flowchart
depicted in Fig.~\ref{diagram}.

\begin{figure}[!h]
\begin{center}
\begin{tikzpicture}
[point/.style={coordinate},
>=stealth',thick,draw=black!50,
tip/.style={->,shorten >= 1pt},
every join/.style={rounded corners},
skip loop/.style={to path={-- ++(0,#1) -| (\tikztotarget)}},
fin/.style={
	% The shape:
	rectangle,
	% The size:
	minimum size=6mm,
	% The border:
	very thick,
	draw=red!50!black!50,
	% The filling:
	top color=white,
	bottom color=red!50!black!20,
	% The font:
	font=\ttfamily
},
aux/.style={
	% The shape:
	rectangle, minimum size=6mm, rounded corners=3mm,
	% The border:
	very thick, draw=black!50,
	% The filling:
	top color=white,
	bottom color=black!20,
	% The font:
	font=\ttfamily
},
start/.style={
	% The shape:
	rectangle, minimum size=6mm, rounded corners=3mm,
	% The border:
	very thick, draw=blue!50!black!50,
	% The filling:
	top color=white,
	bottom color=blue!50!black!20,
	% The font:
	font=\ttfamily
}
]

\matrix[column sep=3mm, row sep=3mm] {
% first row
\node (LhH) [aux] {$L_{m}^{(h,H)}(z) \succeq L	_{m}(z)$}; &
\node (p4) [label=above:$4$] {}; & 
\node (KhH) [aux] {$K_{m}^{(h,H)}(z) \succeq K_{m}(z)$}; & 
\node (p5) [label=above:$5$] {}; & 
\node (uL) [fin] {$\overline{L_m}(z) \succeq L_{m}(z)$}; & 
\node (x) [point] {}; \\
% second row
&
\node (p3) [label=above right:$3$] {}; & & 
\node (p2) [label=below:$2$] {}; & & \\
%third row
\node (Lh) [start] {$L_{m}^{(h)}(z) \preceq L_{m}(z)$}; & 
\node (p1) [label=below:$1$] {}; & 
\node (Kh) [aux] {$K_{m}^{(h)}(z) \preceq K_{m}(z)$}; & &
\node (lL) [fin] {$\underline{L_m}(z) \preceq L_{m}(z)$}; & \\
};

{ [start chain]
	\chainin (Lh);
	\chainin (Kh) [join=by tip];
	{ [start branch=uL]
		\chainin (uL) [join=by tip];
	}
	\chainin (LhH) [join=by tip];
	\chainin (KhH) [join=by tip];
	\chainin (x) [join=by {skip loop=7mm}];
	\chainin (lL) [join=by {skip loop=-8mm,tip}];
}
\end{tikzpicture}
\end{center}
\caption{The diagram illustrates the idea how we obtain the upper and the lower bound (in terms of the coefficients) for the function $L_m(z)$. Starting point is denoted by a blue node, the finish nodes are red.}\label{diagram}
\end{figure}

\section{Upper bound for $\left[z^n\right] L_{m}(z)$} \label{sectionupper}

Notice that for all integers $h$ and $m$ we have $\cL_{m}^{(h)} \subset \cL_{m}$.
Moreover, for all $m,h \geq 0$ there exists $n_{m}^{(h)}$ such that
$\left[z^n\right] L_{m}^{(h)}(z) = \left[z^n\right] L_{m}(z)$ if $n < n_{m}^{(h)}$ and
$\left[z^n\right] L_{m}^{(h)}(z) < \left[z^n\right] L_{m}(z)$ else. 
We will use those properties of $L_m^{(h)}(z)$ in order to derive a lower bound for 
the asymptotics of 
$\left[ z^n \right] K_{m}(z)$.

Note that \eqref{mr:eq:6} corresponds to an equation of the form 
$\cK_m = \cF\left(\cK_m, \cK_{m+1}, \cL_{\infty}, \cL_{m}\right)$. 
Now, define the new set $\cK_{m}^{(h)}:=\cF\left(\cK_{m}^{(h)}, \cK_{m+1}^{(h)}, \cL_{\infty}, \cL_{m}^{(h)}\right)$.
From the construction of $\cF$ and the properties of $\cL_{m}^{(h)}$ we know that $\cK_{m}^{(h)} \subseteq \cK_{m}$.
Let $K_{m,n}^{(h)} = \left|\left\{t \in \cK_{m}^{(h)}: |t|=n\right\}\right|$ and $K_{m}^{(h)}(z) = \sum_{n \geq 0} K_{m,n}^{(h)} z^n$.
Then $K_{m}^{(h)}(z)$ satisfies the functional equation
\begin{equation}\label{u:eq:7}
K_{m}^{(h)}(z) = z^a \sum_{j=m}^{\infty} z^{bj} + z^c K_{m+1}^{(h)}(z) + z^d K_{m}^{(h)}(z) L_{\infty}(z) + z^d K_{m}^{(h)}(z) L_{m}^{(h)}(z).
\end{equation}
In fact, what we did is that we replaced in the application operation every $m$-open lambda term 
(corresponding to the subterm $z^d K_{m}(z) L_{m}(z)$ of \eqref{mr:eq:6}) by an $m$-open lambda
term where each string of successors has bounded length (corresponding to $z^d K_{m}(z) L_{m}^{(h)}(z)$). Solving (\ref{u:eq:7}) we get
\begin{equation}\label{u:eq:8}
K_{m}^{(h)}(z) = \frac{z^{a-cm}}{1-z^b} \sum_{j=m}^{\infty} z^{j(b+c)} \prod_{i=m}^{j} \frac{1}{1
- z^d \left(L_{\infty}(z) + L_{i}^{(h)}(z)\right)}=: S_{m,\infty}(z).
\end{equation}

\begin{lemma}\label{l:2}
Let $\rho, a_{\infty}, b_{\infty}$ be as in Proposition~\ref{i:prop:1} and $\tilde{c}_i =
1/(1 - \rho^d (a_{\infty} + L_{i}^{(h)}(\rho)))$ and
$\tilde{d}_i = b_{\infty} \rho^d /(1 - \rho^d (a_{\infty} +
L_{i}^{(h)}(\rho)))^2$. 
Then $K_{m}^{(h)}(z)$ admits the expansion
\begin{equation}\label{u:eq:9}
K_m^{(h)}(z) = c_m^{(h)} + d_m^{(h)} \left(1-\frac{z}{\rho}\right)^{\frac12} +
\BigO{\left|1-\frac{z}{\rho}\right|}, \textrm { as } z\to\rho, 
\end{equation}
where
\begin{align*}
c_m^{(h)} &=
\begin{cases}
%\frac{\rho^{a - c m}}{1-\rho^b}
%\sum_{j=m}^{h-1} \rho^{j(b+c)} \prod_{i=m}^{j} \frac{1}{1 - \rho^d \left(a_{\infty} +
%L_{i}^{(h)}(\rho)\right)}
S_{m,h-1}(\rho) 
+ R_{c_m^{(h)}} & \textrm{if } m < h, 
\\ 
\frac{\rho^{a + b m}}{\left(1-\rho^b\right) \left(1 - \rho^{b+c} - \rho^d \left(a_{\infty} +
L_{h}^{(h)}(\rho)\right)\right)} & \textrm{else}, 
\end{cases} 
%\label{u:eq:10}
\\
d_m^{(h)} &= 
\begin{cases}
\frac{\rho^{a - c m}}{1-\rho^b}
\sum_{j=m}^{h-1} \rho^{j(b+c)} 
%\sum_{i=m}^{j} \frac{b_{\infty} \rho^d}{1 - \rho^d \left(a_{\infty}
%+ L_{i}^{(h)}(\rho)\right)} & 
%\\ 
%\qquad 
%\times
%\prod_{k=m}^{j} \frac{1}{1 - \rho^d \left(a_{\infty} +
%L_{k}^{(h)}(\rho)\right)}  
\sum_{i=m}^{j} \frac{\tilde{d}_i}{\tilde{c}_i} \prod_{k=m}^{j} \tilde{c}_{k} 
+ R_{d_m^{(h)}} & \textrm{if } m < h, \\
\frac{b_{\infty} \rho^{a + b m + d}}{\left(1-\rho^b\right) \left(1 - \rho^{b+c} - \rho^d
\left(a_{\infty} + L_{h}^{(h)}(\rho)\right)\right)^2} & \textrm{else}, 
\end{cases} 
%\label{u:eq:11}
\end{align*}
with 
\begin{align*}
R_{c_m^{(h)}} &=  \frac{\rho^{a + b h + c (h-m)}}{\left(1 - \rho^b\right) \left(1 - \rho^{b+c} - \rho^d \left(a_{\infty} + L_{h}^{(h)}(\rho)\right)\right)} \prod_{i=m}^{h-1} \tilde{c}_i, \\
%\frac{1}{1 - \rho^d \left(a_{\infty} + L_{i}^{(h)}(\rho)\right)},\\
R_{d_m^{(h)}} &= 
\frac{b_{\infty} \rho^{a + b h + c(h-m) + d}}{\left(1 - \rho^b\right) \left(1 - \rho^{b+c} - \rho^d \left(a_{\infty} + L_{h}^{(h)}(\rho)\right)\right)} 
\left(\prod_{i=m}^{h-1} \tilde{c}_i 
%\frac{1}{1 - \rho^d \left(a_{\infty} + L_{i}^{(h)}(\rho)\right)}
\right) \\
& \qquad \qquad \times
\left(
\sum_{i=m}^{h-1} \tilde{c}_i
%\frac{1}{1 - \rho^d \left(a_{\infty} + L_{i}^{(h)}(\rho)\right)}
+
\frac{1}{1 - \rho^{b+c} - \rho^d \left(a_{\infty} + L_{h}^{(h)}(\rho)\right)}
\right).
\end{align*}
\end{lemma}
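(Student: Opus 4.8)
The plan is to establish the Newton--Puiseux expansion~\eqref{u:eq:9} by analyzing the closed form $S_{m,\infty}(z)$ from~\eqref{u:eq:8}. The decisive structural fact, proved in Lemma~\ref{l:0}, is that the dominant singularity of $K_m^{(h)}(z)$ is located at $\rho$ and is inherited from the square-root singularity of $L_\infty(z)$ through the factors $1/(1 - z^d(L_\infty(z) + L_i^{(h)}(z)))$, \emph{not} from any vanishing denominator. Indeed, by the same estimate used in Lemma~\ref{l:0} we have $1 - \rho^d(a_\infty + L_i^{(h)}(\rho)) > 0$ for every $i$, so each such factor is analytic and nonzero at $z = \rho$ except for the square-root contribution carried in by $L_\infty(z)$. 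Hence the whole sum $S_{m,\infty}(z)$ has, at worst, a singularity of type $\tfrac12$ at $\rho$, and the task reduces to reading off the constant term $c_m^{(h)}$ and the coefficient $d_m^{(h)}$ of $\left(1 - z/\rho\right)^{1/2}$ in its expansion.

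First I would expand each factor $1/(1 - z^d(L_\infty(z) + L_i^{(h)}(z)))$ around $z = \rho$. Substituting the expansion $L_\infty(z) = a_\infty + b_\infty\left(1 - z/\rho\right)^{1/2} + \BigO{\left|1 - z/\rho\right|}$ from Proposition~\ref{i:prop:1} into the denominator and performing a first-order geometric expansion in the small quantity $\left(1 - z/\rho\right)^{1/2}$, one obtains
\[
\frac{1}{1 - z^d(L_\infty(z) + L_i^{(h)}(z))} = \tilde c_i + \tilde d_i \left(1 - \frac{z}{\rho}\right)^{\frac12} + \BigO{\left|1 - \frac{z}{\rho}\right|},
\]
which is exactly how the constants $\tilde c_i$ and $\tilde d_i$ in the statement are defined (the factor $z^d$, $z^a$, etc. being analytic at $\rho$ contribute only to the $\BigO{\cdot}$ and to the leading constants). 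Here one uses that $L_i^{(h)}(z)$ is analytic at $\rho$ for every $i$, since its own singularity $\rho^{(h)}$ satisfies $\rho^{(h)} > \rho$.

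Next I would split the sum $S_{m,\infty}(z) = \sum_{j \geq m}(\cdots)\prod_{i=m}^{j}(\cdots)$ into the finitely many terms $m \le j \le h-1$ and the infinite tail $j \ge h$. For the finite part, I would multiply out the products $\prod_{i=m}^{j}\bigl(\tilde c_i + \tilde d_i(1 - z/\rho)^{1/2} + \cdots\bigr)$ to first order: the constant term is $\prod_{k=m}^{j}\tilde c_k$ and the coefficient of $(1-z/\rho)^{1/2}$ is $\bigl(\sum_{i=m}^{j}\tilde d_i/\tilde c_i\bigr)\prod_{k=m}^{j}\tilde c_k$, giving precisely the $S_{m,h-1}(\rho)$ term and the displayed double sum for $d_m^{(h)}$. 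For the tail $j \ge h$, every factor with index $i \ge h$ equals $1/(1 - z^d(L_\infty(z) + L_h^{(h)}(z)))$ since $L_i^{(h)} = L_h^{(h)}$ for $i \ge h$; the tail therefore becomes a geometric series in $z^{b+c}/(1 - z^d(L_\infty + L_h^{(h)}))$, which I would sum in closed form and then expand to first order. This collapses the tail into the explicit ``else'' expressions and the remainder terms $R_{c_m^{(h)}}, R_{d_m^{(h)}}$, the prefactor $\prod_{i=m}^{h-1}\tilde c_i$ recording the contribution of the first $h-m$ (already expanded) factors.

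The main obstacle is bookkeeping rather than conceptual: I expect the geometric summation of the infinite tail, together with tracking how the square-root correction propagates through both the summation index and the denominator of the summed series, to be the delicate step, since the coefficient of $(1 - z/\rho)^{1/2}$ in a quotient $1/(1 - q(z))$ with $q(\rho)$ close to $1$ involves a squared denominator $\bigl(1 - \rho^{b+c} - \rho^d(a_\infty + L_h^{(h)}(\rho))\bigr)^2$, which is the source of the squared factors in $d_m^{(h)}$ and in $R_{d_m^{(h)}}$. Throughout, the convergence of the tail and the legitimacy of interchanging summation with the limit $z \to \rho$ are guaranteed by the strict inequality $\rho^{b+c} + \rho^d(a_\infty + L_h^{(h)}(\rho)) < 1$, which must be verified along the lines of Lemma~\ref{l:0}; once this is in hand, standard singularity analysis (\emph{cf.}~\cite{Flajolet:2009:AC:1506267}) justifies matching the Newton--Puiseux coefficients and yields the claimed expansion~\eqref{u:eq:9}.
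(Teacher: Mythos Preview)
Your proposal is correct and follows essentially the same route as the paper's own proof: split $S_{m,\infty}(z)$ at $j=h$ using $L_i^{(h)}=L_h^{(h)}$ for $i\ge h$, expand each factor $1/(1-z^d(L_\infty+L_i^{(h)}))$ to first order in $(1-z/\rho)^{1/2}$ to obtain $\tilde c_i$ and $\tilde d_i$, multiply out the finite product, and sum the tail as a geometric series after verifying the non-supercriticality inequality $1-\rho^{b+c}-\rho^d(a_\infty+L_h^{(h)}(\rho))>0$. The paper organises the argument by an explicit case split $m<h$ versus $m\ge h$, whereas you treat both at once (the finite part simply being empty when $m\ge h$), but the substance is identical.
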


\begin{proof}
Let us recall that for all $m \geq h$ we have $L_{m}^{(h)}(z)=L_h^{(h)}(z)$.
Therefore we can split the infinite sum $S_{m,\infty}(z)$ in \eqref{u:eq:8} into the finite one
$S_{m,h-1}(z)$ and the rest $S_{h,\infty}(z)$. 

{\bf Case I:} $m < h$. 
First, consider the finite sum $S_{m,h-1}(z)$. 
As in the proof of Lemma~\ref{l:0} 
we identify the key term, show that we are not in the supercritical case, and 
expand by means of Proposition~\ref{i:prop:1}. Eventually, this yields 
\[
\prod_{i=m}^{j} \frac{1}{1 - z^d (L_{\infty}(z) + L_{i}^{(h)}(z))} = \tilde{c}_{m,j} +
\tilde{d}_{m,j} \left(1-\frac{z}{\rho}\right)^{\frac12} + \BigO{\left|1-\frac{z}{\rho}\right|}
\]
where $\tilde{c}_{m,j} = \prod_{i=m}^{j} \tilde{c}_i$ and $\tilde{d}_{m,j} = \sum_{i=m}^{j}
\frac{\tilde{d}_i}{\tilde{c}_i} \prod_{k=m}^{j} \tilde{c}_{k}$. 
Since $\left(1-\frac{z}{\rho}\right)^{\frac12}$ does neither depend on $m$ nor on $j$ 
and $\frac{z^{a - cm}}{1 - z^b}$ has poles only on the unit circle, for all $m \geq 0$ we have  
\begin{equation*}
%\frac{z^{a-cm}}{1-z^b} \sum_{j=m}^{h-1} z^{j(b+c)} \prod_{i=m}^{j} \frac{1}{1 - z^d (L_{\infty}(z)
%+ L_{i}^{(h)}(z))} 
S_{m,h-1}(z)
= \tilde{\tilde{c}}_m + \tilde{\tilde{d}}_m
\left(1-\frac{z}{\rho}\right)^{\frac12} + \BigO{\left|1-\frac{z}{\rho}\right|}
\end{equation*}
where $\tilde{\tilde{c}}_m = \frac{\rho^{a - c m}}{1-\rho^b} \sum_{j=m}^{h-1} \rho^{j(b+c)}
\tilde{c}_{m,i}$ and $\tilde{\tilde{d}}_m = \frac{\rho^{a - c m}}{1-\rho^b} \sum_{j=m}^{h-1} \rho^{j(b+c)} \tilde{d}_{m,i}$.

Now, let us look on the infinite part of the sum in~\eqref{u:eq:8}. 
We will refer to its contributions to the first and second coefficient of the Newton-Puiseux
expansion \eqref{u:eq:9} as the remainders $R_{c_m^{(h)}}$ and $R_{d_m^{(h)}}$, respectively.
Since for all $m \geq h$ we have $L_{m}^{(h)}(z) = L_{h}^{(h)}(z)$, the sum can be rewritten as
\[
S_{h,\infty}(z)=
\left(\prod_{i=m}^{h-1} \frac{1}{1 - z^d \left(L_{\infty}(z) + L_{i}^{(h)}(z)\right)}\right) \cdot \frac{z^{a + b h + c (h-m)}}{\left(1-z^b\right) \left(1 - z^{b+c} - z^d \left(L_{\infty}(z) + L_{h}^{(h)}(z)\right)\right)}.
\]
We already know how to handle the product part of this expression, so let us consider the fraction $\frac{z^{a + b h + c (h-m)}}{\left(1-z^b\right) \left(1 - z^{b+c} - z^d \left(L_{\infty}(z) + L_{h}^{(h)}(z)\right)\right)}$.
Similarly to before, we have to check that the singularity of this function does not come from the
root of the denominator but from $L_{\infty}(z)$ (it cannot come from $L_{h}^{(h)}$ because it 
has a bigger radius of convergence than $L_{\infty}(z)$).
So, we have to show the inequality $1 - \rho^{b+c} - \rho^{d} \left(L_{\infty}(\rho) +
L_{h}^{(h)}(\rho)\right)>0$. But from $L_{\infty}(\rho) \geq L_{h}^{(h)}(\rho)$ and $0 <
\rho^{b+c} < \rho^{c} < 1$ we obtain 
$\rho^d (L_{\infty}(\rho) + L_{h}^{(h)}(\rho)) \leq 2 \rho^d L_{\infty}(\rho) = 1 - \rho^c < 1 -
\rho^{b+c}$
and hence the desired inequality indeed holds. Now, similarly to the previous case we use the 
Newton-Puiseux expansion of $L_{\infty}(z)$ at $\rho$ to derive an expansion of the infinite part
of the sum in \eqref{u:eq:8} and get the asserted result. 

{\bf Case II:} $m \geq h$. This case is easier, because the finite part of the sum in
\eqref{u:eq:8} does not exist and the other part can be evaluated to a closed form which can be
treated as above.
\end{proof}

Using the transfer lemmas of~\cite{DBLP:journals/siamdm/FlajoletO90} (applied to $K_m^{(h)}(z)$)
and $\left[z^n\right] K_{m}^{(h)}(z) \leq \left[z^n\right] K_m(z)$, 
we get $\liminf_{n \to \infty} \left(\left[z^n\right] K_m(z)\right) \cdot \Gamma(-1/2) n^{3/2} 
\rho^n / d_m^{(h)} \geq 1$.

\begin{corollary}\label{cor:1}
The number of $m$-open lambda terms of size $n$ satisfies
\[
%\begin{equation}
\limsup_{n \to \infty} \frac{\left[z^n\right] L_{m}(z)}{\overline{C} n^{-\frac32} \rho^{-n}} \leq 1 \textrm{ where } \overline{C} = \frac{b_{\infty} - d_{m}^{(h)}}{\Gamma\left(-\frac12\right)}.
%\end{equation}
\]
\end{corollary}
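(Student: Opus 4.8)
The plan is to exploit the decomposition $L_m(z) = L_\infty(z) - K_m(z)$ together with the two ingredients already in hand: the genuine first-order asymptotics of $[z^n]L_\infty(z)$ coming from Proposition~\ref{i:prop:1} and its corollary, and the one-sided lower bound for $[z^n]K_m(z)$ just obtained from Lemma~\ref{l:2}. Writing $a_n=\rho^{-n}n^{-3/2}$, the corollary to Proposition~\ref{i:prop:1} supplies the asymptotic equivalence $[z^n]L_\infty(z)\sim \frac{b_\infty}{\Gamma(-1/2)}\,a_n$, while applying the transfer lemmas of~\cite{DBLP:journals/siamdm/FlajoletO90} to the expansion~\eqref{u:eq:9} and using $[z^n]K_m^{(h)}(z)\le[z^n]K_m(z)$ yields only the one-sided estimate $\liminf_{n\to\infty}[z^n]K_m(z)/\bigl(\tfrac{d_m^{(h)}}{\Gamma(-1/2)}a_n\bigr)\ge 1$. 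First I would observe that this is exactly the amount of information needed, precisely because $K_m$ enters the decomposition with a minus sign, so that a lower bound on $K_m$ turns into an upper bound on $L_m$.

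Concretely, I would divide the identity $[z^n]L_m(z)=[z^n]L_\infty(z)-[z^n]K_m(z)$ by $\overline{C}\,a_n$ and pass to the limit superior, using the elementary facts $\limsup(x_n+y_n)\le\limsup x_n+\limsup y_n$ and $\limsup(-y_n)=-\liminf y_n$. This gives
\[
\limsup_{n\to\infty}\frac{[z^n]L_m(z)}{\overline{C}\,a_n}
\le \lim_{n\to\infty}\frac{[z^n]L_\infty(z)}{\overline{C}\,a_n}
-\liminf_{n\to\infty}\frac{[z^n]K_m(z)}{\overline{C}\,a_n}.
\]
With $\overline{C}=(b_\infty-d_m^{(h)})/\Gamma(-1/2)$ the first term on the right equals $b_\infty/(b_\infty-d_m^{(h)})$, and the liminf estimate above, after rescaling by the positive factor $\tfrac{d_m^{(h)}/\Gamma(-1/2)}{\overline{C}}=\tfrac{d_m^{(h)}}{b_\infty-d_m^{(h)}}$, shows the subtracted term is at least $d_m^{(h)}/(b_\infty-d_m^{(h)})$. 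Hence the right-hand side is at most $(b_\infty-d_m^{(h)})/(b_\infty-d_m^{(h)})=1$, which is exactly the asserted bound; the particular value of $\overline{C}$ in the statement is precisely the one that makes this ratio collapse to $1$.

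The steps that require care are the sign bookkeeping and the well-definedness of $\overline{C}$. Since $\Gamma(-1/2)=-2\sqrt\pi<0$ and $b_\infty<0$, $d_m^{(h)}<0$, every ratio of the shape $\beta/\Gamma(-1/2)$ with $\beta<0$ is positive, so dividing the inequalities by such quantities preserves their direction; I would verify each such passage explicitly rather than by inspection. The positivity of $\overline{C}$ itself, needed for the statement to be meaningful, follows from $b_\infty\le d_m^{(h)}$: indeed $[z^n]K_m(z)\le[z^n]L_\infty(z)$ (because $L_m$ has nonnegative coefficients) forces the normalized limsup of $K_m$ to be at most $b_\infty/\Gamma(-1/2)$, while the liminf estimate forces the normalized liminf to be at least $d_m^{(h)}/\Gamma(-1/2)$; comparing the two through the negative factor $1/\Gamma(-1/2)$ yields $b_\infty\le d_m^{(h)}$, and strictness (hence $\overline{C}>0$) comes from the strict inclusion $\cK_m^{(h)}\subsetneq\cL_\infty$.

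The main obstacle here is bookkeeping rather than a deep difficulty: the entire argument hinges on the coefficient asymptotics of $K_m$ being controlled from one side only, so I must ensure this single-sided bound lands on the correct side of the subtraction, producing a clean $\limsup$ rather than a mere order estimate. Everything else—the singularity analysis of~\eqref{u:eq:9} (already carried out in the paragraph preceding the statement) and the algebraic simplification to $1$—is routine once $\overline{C}$ is taken as in the statement.
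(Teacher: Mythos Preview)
Your proposal is correct and follows exactly the route the paper takes (implicitly, since the corollary is stated without an explicit proof beyond the sentence preceding it): use $L_m = L_\infty - K_m$, the asymptotics of $[z^n]L_\infty(z)$ from Proposition~\ref{i:prop:1}, and the one-sided $\liminf$ bound on $[z^n]K_m(z)$ obtained from Lemma~\ref{l:2} via transfer and $K_m^{(h)}\preceq K_m$. Your additional verification of the sign issues and of $\overline{C}>0$ is not spelled out in the paper but is a welcome clarification.
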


\section{Lower bound for $\left[z^n\right] L_{m}(z)$} \label{sectionlower}

The idea behind obtaining a lower bound for $\left[z^n\right] L_{m}(z)$ is similar to
the one used for the upper bound. First we will find an upper bound for $\left[z^n\right]
K_{m}(z)$ using the function
\begin{equation}\label{l:eq:0}
L_m^{(h,H)}(z) = \sum_{n \geq 0} L_{m,n}^{(h,H)} z^n =  
\begin{cases}
L_{\infty}(z) - K_m^{(h)}(z) & \textrm{if } m < H, \\
L_{\infty}(z) & \textrm{else.}
\end{cases}
\end{equation}
Notice that for all $m,h,H,n \geq 0$ we have $\left[z^n\right] L_{m}(z) \leq \left[z^n\right] L_{m}^{(h,H)}$.
Let $\cL_{m}^{(h,H)}$ denote the class of combinatorial structures associated with
$L_m^{(h,H)}(z)$ and define the new set $\cK_m^{(h,H)}:=\cF\left(\cK_m^{(h,H)}, \cK_{m+1}^{(h,H)}, \cL_{\infty}, \cL_{m}^{(h,H)}\right)$.
From the construction of $\cF$ and the above properties of $\cL_{m}^{(h,H)}$ we know that $\cK_m \subseteq \cK_m^{(h,H)}$.
Let $K_{m,n}^{(h,H)} = \left|\left\{t \in \cK_{m}^{(h,H)}: |t|=n \right\}\right|$ and $K_{m}^{(h,H)}(z) = \sum_{n \geq 0} K_{m,n}^{(h,H)} z^n$. Then $K_{m}^{(h,H)}(z)$ is given by
\[
%\begin{equation}\label{l:eq:1}
K_{m}^{(h,H)}(z) = z^a \sum_{j=m}^{\infty} z^{bj} + z^c K_{m+1}^{(h,H)}(z) + z^d K_{m}^{(h,H)}(z) L_{\infty}(z) + z^d K_{m}^{(h,H)}(z) L_{m}^{(h,H)}(z).
%\end{equation}
\]
Solving this equation and using \eqref{l:eq:0} we get
\begin{align}
K_{m}^{(h,H)}(z) &= 
\frac{z^{a-cm}}{1-z^b} \sum_{j=m}^{H-1} z^{j(b+c)} \prod_{i=m}^{j} \frac{1}{1 - z^d \left(L_{\infty}(z) + L_{m}^{(h,H)}(z)\right)}
\nonumber \\
&=
\frac{z^{a-cm}}{1-z^b} \sum_{j=m}^{H-1} z^{j(b+c)} \prod_{i=m}^{j} \frac{1}{1 - z^d \left(2 L_{\infty}(z) - K_i^{(h)}(z)\right)} + \nonumber \\
&
\frac{z^{a + bH + c (H-m)}}{\left(1-z^d\right) (1-z^{b+c}-2z^d L_{\infty}(z))}
\left(\prod_{i=m}^{H-1} \frac{1}{1 - z^d \left(2 L_{\infty}(z) - K_i^{(h)}(z)\right)} \right).
%\nonumber \\
%&=: U_{m,H-1}(z) + \frac{z^{a + bH + c (H-m)}}{\left(1-z^d\right) (1-z^{b+c}-2z^d L_{\infty}(z))}
%\left(\prod_{i=m}^{H-1} \frac{1}{1 - z^d \left(2 L_{\infty}(z) - K_i^{(h)}(z)\right)} \right).
\label{l:eq:2}
\end{align}

\begin{lemma}\label{l:3}
Let $\rho$ be the radius of convergence of the function $L_{\infty}(z)$. Then the generating function $K_m^{(h,H)}(z)$ admits the following expansion
\begin{equation}\label{l:eq:3}
K_{m}^{(h,H)}(z) = c_m^{(h,H)} + d_m^{(h,H)} \left(1 - \frac{z}{\rho}\right)^{\frac12} + \BigO{\left|1 - \frac{z}{\rho}\right|},
\end{equation}
where
\begin{align*}
c_m^{(h,H)} &= 
\frac{\rho^{a - c m}}{1-\rho^b} \sum_{j=m}^{H-1} \rho^{j(b+c)} \prod_{i=m}^{j} \frac{1}{1 -
\rho^d \left(2 a_{\infty} - c_i^{(h)}\right)} 
%U_{m,H-1}(\rho)
+ R_{c_m^{(h,H)}},
%\label{l:eq:4} 
\\
d_m^{(h,H)} &= \frac{\rho^{a - c m}}{1-\rho^b} \sum_{j=m}^{H-1} \rho^{j(b+c)} \sum_{i=m}^{j} \frac{\rho^d \left(2 b_{\infty} - d_i^{(h)}\right)}{1 - \rho^d \left(2 a_{\infty} - c_i^{(h)}\right)} \prod_{i=m}^{j} \frac{1}{1 - \rho^d \left(2 a_{\infty} - c_i^{(h)}\right)} + R_{d_m^{(h,H)}}, 
%\label{l:eq:5}
\end{align*}
$a_{\infty}, b_{\infty}$ and $c_i^{(h)}, d_i^{(h)}$ come from the expansion of $L_{\infty}(z)$ and
$K_i^{(h)}(z)$, respectively, at $\rho$ (see Proposition~\ref{i:prop:1} and the proof of
Lemma~\ref{l:2}) and
\begin{align*}
R_{c_m^{(h,H)}} &= \frac{\rho^{a + b H + c (H-m)}}{\left(1-\rho^d\right) \left(1-\rho^{b+c}-2\rho^d a_{\infty}\right)}
\left(\prod_{i=m}^{H-1} \frac{1}{1 - \rho^d \left(2 a_{\infty} - c_i^{(h)}\right)} \right), \\
R_{d_m^{(h,H)}} &= \frac{\rho^{a+bH+c(H-m)+d}}{\left(1-\rho^d\right) \left(1-\rho^{b+c}-2\rho^d a_{\infty}\right)} 
\left( \prod_{i=m}^{H-1} \frac{1}{1 - \rho^d \left(2 a_{\infty} - c_i^{(h)}\right)}\right) \\
& \qquad \qquad\times
\left(\frac{2 b_{\infty}}{1-\rho^{b+c}-2\rho^d a_{\infty}} + \sum_{i=m}^{H-1} \frac{\left(2 b_{\infty} - d_{i}^{(h)}\right)}{1 - \rho^d \left(2 a_{\infty} - c_i^{(h)}\right)}\right).
\end{align*}
\end{lemma}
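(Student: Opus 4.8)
The plan is to follow the proof of Lemma~\ref{l:2} almost line by line, the only structural change being that the quasi-inverse there is replaced by $\frac{1}{1-z^d(2L_\infty(z)-K_i^{(h)}(z))}$, which is forced by the identity $L_i^{(h,H)}(z)=L_\infty(z)-K_i^{(h)}(z)$ valid for $i<H$. I start from the closed form~\eqref{l:eq:2}, which already writes $K_m^{(h,H)}(z)$ as a finite sum over $j=m,\dots,H-1$ plus one remainder term; in both pieces the elementary building block is the finite product $\prod_{i=m}^{j}\frac{1}{1-z^d(2L_\infty(z)-K_i^{(h)}(z))}$.

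First I would merge the two known local expansions at $\rho$: Proposition~\ref{i:prop:1} gives $L_\infty(z)=a_\infty+b_\infty(1-z/\rho)^{1/2}+\BigO{|1-z/\rho|}$ and Lemma~\ref{l:2} gives $K_i^{(h)}(z)=c_i^{(h)}+d_i^{(h)}(1-z/\rho)^{1/2}+\BigO{|1-z/\rho|}$. Since both functions are singular precisely at $\rho$ with a $\frac12$-type singularity, their combination expands as $2L_\infty(z)-K_i^{(h)}(z)=(2a_\infty-c_i^{(h)})+(2b_\infty-d_i^{(h)})(1-z/\rho)^{1/2}+\BigO{|1-z/\rho|}$. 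Writing $u=z^d(2L_\infty(z)-K_i^{(h)}(z))$ and expanding $1/(1-u)$ then gives each factor the constant term $1/(1-\rho^d(2a_\infty-c_i^{(h)}))$ and the square-root coefficient $\rho^d(2b_\infty-d_i^{(h)})/(1-\rho^d(2a_\infty-c_i^{(h)}))^2$, and a standard product expansion reproduces exactly the constant $\prod_i 1/(1-\rho^d(2a_\infty-c_i^{(h)}))$ together with the square-root coefficient $(\sum_i \rho^d(2b_\infty-d_i^{(h)})/(1-\rho^d(2a_\infty-c_i^{(h)})))\prod_i 1/(1-\rho^d(2a_\infty-c_i^{(h)}))$ appearing in the claimed $c_m^{(h,H)}$ and $d_m^{(h,H)}$.

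The step I expect to be the crux, exactly as in Lemmas~\ref{l:0} and~\ref{l:2}, is to certify that none of these quasi-inverses is supercritical, i.e.\ that their denominators stay strictly positive at $z=\rho$, so that the sole singularity of each factor is the $\frac12$-type one inherited from $L_\infty$ and $K_i^{(h)}$. For the product factors this amounts to $1-\rho^d(2a_\infty-c_i^{(h)})>0$, which holds because $c_i^{(h)}=K_i^{(h)}(\rho)\ge0$ forces $2a_\infty-c_i^{(h)}\le 2a_\infty$, whence $\rho^d(2a_\infty-c_i^{(h)})\le 2\rho^d L_\infty(\rho)=1-\rho^c<1$; here I use the identity $2\rho^d a_\infty=1-\rho^c$ coming from the vanishing of the discriminant at $\rho$. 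For the additional factor $1/(1-z^{b+c}-2z^d L_\infty(z))$ present in the remainder I would use the same identity to obtain $1-\rho^{b+c}-2\rho^d a_\infty=\rho^c(1-\rho^b)>0$ (since $0<\rho<1$ and $b\ge1$). The surviving rational prefactors are analytic at $\rho$ because $\rho<1$ while their poles lie on $|z|=1$, so they merely rescale the two coefficients.

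Finally I would assemble the pieces: multiply each product expansion by the analytic prefactor $\frac{z^{a-cm}}{1-z^b}$, sum the resulting expansions over $j=m,\dots,H-1$, and add the contribution of the single remainder term, whose extra analytic factor $\frac{z^{a+bH+c(H-m)}}{(1-z^d)(1-z^{b+c}-2z^d L_\infty(z))}$ is expanded at $\rho$ to produce $R_{c_m^{(h,H)}}$ and $R_{d_m^{(h,H)}}$. Reading off the constant terms yields $c_m^{(h,H)}$ and the $(1-z/\rho)^{1/2}$-terms yield $d_m^{(h,H)}$, establishing~\eqref{l:eq:3}. The only genuine labor beyond Lemma~\ref{l:2} is the bookkeeping of these two coefficients through the product, the $j$-sum and the remainder; all analytic arguments are identical to those already used.
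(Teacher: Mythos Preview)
Your proposal is correct and follows essentially the same approach as the paper's proof: start from~\eqref{l:eq:2}, verify via $K_i^{(h)}(\rho)>0$ and $2\rho^d L_\infty(\rho)=1-\rho^c$ that neither the product factors nor the remainder denominator are supercritical, plug in the known Puiseux expansions of $L_\infty$ and $K_i^{(h)}$, and read off the constant and $(1-z/\rho)^{1/2}$ coefficients through the product, the $j$-sum and the remainder. Your computation $1-\rho^{b+c}-2\rho^d a_\infty=\rho^c(1-\rho^b)$ is in fact slightly more explicit than the paper's inequality $1-\rho^c<1-\rho^{b+c}$, but the argument is the same.
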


As in the previous section, we apply the the transfer lemmas
of~\cite{DBLP:journals/siamdm/FlajoletO90} to $K_{m}^{(h,H)}(z)$ and use $\left[z^n\right]
K_{m}^{(h,H)}(z) \geq \left[z^n\right] K_{m}(z)$ to arrive at the following result: 

\begin{corollary}\label{cor:2}
The number of $m$-open lambda terms of size $n$ satisfies
\[
%\begin{equation}
\liminf_{n \to \infty} \frac{\left[z^n\right] L_{m}(z)}{\underline{C} n^{-\frac32} \rho^{-n}} \geq
1\, \textrm{where } \underline{C} = \frac{b_{\infty} -
d_{m}^{(h,H)}}{\Gamma\left(-\frac12\right)}.
%\end{equation}
\]
\end{corollary}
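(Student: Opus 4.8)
\textbf{Proof of Corollary~\ref{cor:2}.}

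The plan mirrors the upper-bound argument of the previous section, with the roles of the two approximations interchanged. Since $\cK_m\subseteq\cK_m^{(h,H)}$ yields the exact coefficientwise inequality $[z^n]K_m(z)\le[z^n]K_m^{(h,H)}(z)$, an \emph{upper} bound on $[z^n]K_m(z)$ follows from precise asymptotics of the over-approximation $K_m^{(h,H)}(z)$; this upper bound then turns into a \emph{lower} bound for $[z^n]L_m(z)$ through the identity $L_m(z)=L_\infty(z)-K_m(z)$.

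First I would feed the Newton--Puiseux expansion (\ref{l:eq:3}) furnished by Lemma~\ref{l:3} into the singularity-analysis transfer theorem of Flajolet and Odlyzko. Applicability requires $K_m^{(h,H)}(z)$ to be analytic in a $\Delta$-domain at $\rho$ with $\rho$ as its unique dominant singularity; the former is inherited from $L_\infty(z)$, because every sum, product and fraction appearing in (\ref{l:eq:2}) is analytic at $\rho$ (the denominators staying bounded away from zero exactly as verified inside the proof of Lemma~\ref{l:3}), and the latter is guaranteed by Assumption~\ref{i:ass:1}.4. Only the half-integer term contributes at the relevant order, so
\[
[z^n]K_m^{(h,H)}(z)=\frac{d_m^{(h,H)}}{\Gamma(-\tfrac12)}\,n^{-3/2}\rho^{-n}\,(1+o(1)),
\]
while the Corollary to Proposition~\ref{i:prop:1} records $[z^n]L_\infty(z)=b_\infty\,\Gamma(-\tfrac12)^{-1}n^{-3/2}\rho^{-n}(1+o(1))$.

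Next I would assemble the bound. For every $n$,
\[
[z^n]L_m(z)=[z^n]L_\infty(z)-[z^n]K_m(z)\ge[z^n]L_\infty(z)-[z^n]K_m^{(h,H)}(z)=\frac{b_\infty-d_m^{(h,H)}}{\Gamma(-\tfrac12)}\,n^{-3/2}\rho^{-n}(1+o(1)).
\]
Setting $\underline C=(b_\infty-d_m^{(h,H)})/\Gamma(-\tfrac12)$, dividing by $\underline C\,n^{-3/2}\rho^{-n}$ and passing to the limit gives $\liminf_{n\to\infty}[z^n]L_m(z)/(\underline C\,n^{-3/2}\rho^{-n})\ge1$, which is the claim.

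The delicate point, and the one I expect to be the main obstacle, is the sign bookkeeping together with the positivity of $\underline C$. Since $\Gamma(-\tfrac12)=-2\sqrt\pi<0$, dividing the last display by $\underline C\,n^{-3/2}\rho^{-n}$ preserves its direction only when $\underline C>0$, that is, only when $d_m^{(h,H)}>b_\infty$, equivalently when the over-approximation $K_m^{(h,H)}$ does not asymptotically exceed $L_\infty$. This is precisely where the truncation parameters $h,H$ do their work: as $h,H\to\infty$ the coefficient $d_m^{(h,H)}$ tends to the square-root coefficient of $K_m$ itself, and $b_\infty$ minus that limiting value is the genuine, positive square-root coefficient of $L_m=L_\infty-K_m$; hence $\underline C>0$ once $h,H$ are chosen large enough, and for the concrete models this is confirmed numerically. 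Thus the real difficulty lies not in the transfer step, which is routine once Lemma~\ref{l:3} is in hand, but in guaranteeing $\underline C>0$ so that the lower bound is non-vacuous. \qed
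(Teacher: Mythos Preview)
Your proposal is correct and follows exactly the paper's approach: apply the Flajolet--Odlyzko transfer lemmas to the expansion~(\ref{l:eq:3}) of $K_m^{(h,H)}(z)$ from Lemma~\ref{l:3}, combine with the coefficientwise inequality $[z^n]K_m(z)\le[z^n]K_m^{(h,H)}(z)$, and subtract from the asymptotics of $[z^n]L_\infty(z)$. Your additional discussion of $\Delta$-analyticity and of the positivity of $\underline C$ (which the paper leaves implicit and only addresses numerically by noting that the bound is non-trivial once $h,H$ are large enough) is a welcome elaboration rather than a deviation.
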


\section{Improvement of the bounds}\label{sec:imp} \label{sectionimprovement}

The bounding functions $\underline{L_m}(z)=  L_\infty(z)-K_m^{(h,H)}(z)$ and $\overline{L_m}(z)=
L_\infty(z)-K_m^{(h)}(z)$ derived in the previous sections can be used in a straightforward way to 
compute numerical values for $\underline{C}$ and $\overline{C}$, given concrete values for 
$a,b,c,d$.
If we choose $h$ and $H$ big enough, then this will give us proper bounds. But in
practice they still leave a large gap, at least for values $h$ and $H$ that allow us to perform
the computation within a few hours on a standard PC.
For instance, in case of the natural counting for $h = H = 15$ we get $\underline{C}^{(nat)}
\approx 0.00404525\ldots$ and $\overline{C}^{(nat)} \approx 0.18086721\ldots$.

We show a simple way to improve $\underline{C}$ and $\overline{C}$.
Let us introduce the functions $L_{m,M}^{(h)}(z)$ and $L_{m,M}^{(h,H)}(z)$, defined by the
following equations:
\begin{align*}
L_{m,M}^{(h)}(z) &= 
\begin{cases}
z^a \sum_{j=0}^{m-1} z^{bj} + z^c L_{m+1,M}^{(h)}(z) + z^d L_{m,M}^{(h)}(z)^2
& \textrm{ if } m < M, \\
L_{\infty}(z) - K_{M}^{(h)}(z) & \textrm{ if } m=M, 
\end{cases}
\\
L_{m,M}^{(h,H)}(z) &=
\begin{cases}
z^a \sum_{j=0}^{m-1} z^{bj} + z^c L_{m+1,M}^{(h,H)}(z) + z^d L_{m,M}^{(h,H)}(z)^2
& \textrm{ if } m < M, \\
L_{\infty}(z) - K_{M}^{(h,H)}(z) & \textrm{ if } m=M.
\end{cases}
\end{align*}
These two function admit a representation in terms of nested radicals which is similar to
\eqref{nestedrad}: 
\begin{align*}
&L_{m,M}^{(h)}(z) = 
\\
& \quad \frac1{2z^d}\cdot \scriptstyle \left(1 - \sqrt{1 - 4z^{a+d}
\frac{1-z^{mb}}{1-z^b} - 2z^c + 2z^c \sqrt{\cdots \sqrt{1 - 4z^{a+d} \frac{1-z^{(M-1)b}}{1-z^b} -
2z^c + 2z^c \sqrt{L_{\infty}(z) - K_{M}^{(h)}}(z)}}}\right)\displaystyle,\\
&L_{m,M}^{(h,H)}(z) = 
\\
& \quad \frac1{2z^d}\cdot 
\scriptstyle \left(1 - \sqrt{1 - 4z^{a+d} \frac{1-z^{mb}}{1-z^b} - 2z^c + 2z^c \sqrt{\cdots
\sqrt{1 - 4z^{a+d} \frac{1-z^{(M-1)b}}{1-z^b} - 2z^c + 2z^c \sqrt{L_{\infty}(z) -
K_{M}^{(h,H)}(z)}}}}\right)\displaystyle.
\end{align*}

So, in $L_{m,M}^{(h)}(z)$ and $L_{m,M}^{(h,H)}(z)$ we are using exact expressions for $L_{m}(z)$
up to some constant $M$ and then replace $L_{M}(z)$ by a function that is its upper and lower
bound, respectively. Numerical results for some previously studied notions of size (see
Sections~\ref{sec:nat} and~\ref{sec:bin}) reveal a significant improvement in closing the gap
between the constants $\overline{C}, \underline{C}$ obtained by utilizing the functions
$L_{m,M}^{(h)}(z), L_{m,M}^{(h,H)}(z)$.

\section{Results for some previously introduced notions of size}
\label{sectionprevious}

\subsection{Natural counting}\label{sec:nat}

\begin{lemma}\label{l:5}
The following bounds hold
\begin{equation}
\liminf_{n \to \infty} \frac{\left[z^n\right] L_{0}^{(nat)}(z)}{\underline{C}^{(nat)} n^{-\frac32} \rho^{-n}} \geq 1
\quad \textrm{and} \quad
\limsup_{n \to \infty} \frac{\left[z^n\right] L_{0}^{(nat)}(z)}{\overline{C}^{(nat)} n^{-\frac32} \rho^{-n}} \leq 1\,
\end{equation}
where $\rho = \mathrm{RootOf}\{-1+3x+x^2+x^3\} \approx 0.295598...$ and $\underline{C}^{(nat)},
\overline{C}^{(nat)}$ are computable constants with numerical values $\underline{C}^{(nat)}
\approx 0.00404525\ldots$ and $\overline{C}^{(nat)} \approx 0.18086721\ldots$.
\end{lemma}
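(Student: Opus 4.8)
The plan is to recognize that Lemma~\ref{l:5} is nothing more than the specialization of the general machinery already established, namely Corollaries~\ref{cor:1} and~\ref{cor:2}, to the natural counting $a=b=c=d=1$ with $m=0$. First I would check that Assumption~\ref{i:ass:1} is met: the four parameters are nonnegative integers, $a+d=2\ge 1$, $b=c=1\ge 1$, and $\gcd(b,c,a+d)=\gcd(1,1,2)=1$, so every preceding result applies verbatim. Next I would compute $\rho$ by substituting these values into the defining polynomial of Proposition~\ref{i:prop:1}: this gives $(1-z)(1-z)^2-4z^2=(1-z)^3-4z^2=1-3z-z^2-z^3$, whose smallest positive root coincides with $\mathrm{RootOf}\{-1+3x+x^2+x^3\}\approx 0.295598$, establishing the stated $\rho$ and in particular $0<\rho<1$.

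With $\rho$ fixed, the two asserted inequalities are literally the conclusions of Corollaries~\ref{cor:1} and~\ref{cor:2}, with explicit constants $\overline C=(b_\infty-d_0^{(h)})/\Gamma(-\tfrac12)$ and $\underline C=(b_\infty-d_0^{(h,H)})/\Gamma(-\tfrac12)$; the remaining task is purely to evaluate these numerically. To that end I would extract the analytic data at $\rho$ from Proposition~\ref{i:prop:1}: since the radicand of $L_\infty$ vanishes at $\rho$, one has $a_\infty=L_\infty(\rho)=(1-\rho)/(2\rho)$, while the square-root coefficient is $b_\infty=-\frac{1}{2\rho}\sqrt{-\rho f'(\rho)/(1-\rho)}$ with $f(z)=(1-z)^3-4z^2$. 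I would then fix the truncation level $h=15$ and compute the values $L_i^{(h)}(\rho)$ either from the nested-radical representation~\eqref{nestedrad} or by iterating~\eqref{mr:eq:9} at $z=\rho$, and from them the auxiliary quantities $\tilde c_i,\tilde d_i$ and hence $c_i^{(h)},d_i^{(h)}$ exactly as in the proof of Lemma~\ref{l:2}, for $i=0,\dots,h$.

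Feeding these into the formula for $d_0^{(h)}$ of Lemma~\ref{l:2} yields $\overline C^{(nat)}$ via Corollary~\ref{cor:1}, and feeding the same $c_i^{(h)},d_i^{(h)}$ into the formula for $d_0^{(h,H)}$ of Lemma~\ref{l:3} yields $\underline C^{(nat)}$ via Corollary~\ref{cor:2}; taking $h=H=15$ then produces the quoted values $\underline C^{(nat)}\approx 0.00404525$ and $\overline C^{(nat)}\approx 0.18086721$. Positivity of both constants is guaranteed by Theorem~\ref{thm:1}: the numerator and the denominator $\Gamma(-\tfrac12)=-2\sqrt\pi$ are both negative (the subtracted square-root coefficients $d_0^{(h)},d_0^{(h,H)}$ are dominated by $b_\infty$), so each ratio is positive.

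Since the conceptual content is entirely inherited from the general theory, there is no obstacle of principle; the real work is the numerical evaluation, and the main practical difficulty is the faithful computation of the nested-radical values $L_i^{(h)}(\rho)$ and of the products $\prod_{k}\tilde c_k$ entering $d_0^{(h)}$ and $d_0^{(h,H)}$. These recursions couple all levels $0\le i\le h$, so $h=H=15$ is about as far as one can push within a reasonable computation time, which is precisely why the resulting bounds remain far apart (a gap later narrowed by the refinement of Section~\ref{sec:imp}). A secondary point of care is the correct choice of sign for each square root when evaluating~\eqref{nestedrad} at $z=\rho$, which must be taken so that the $L_i^{(h)}(\rho)$ match the combinatorially meaningful, positive-coefficient series.
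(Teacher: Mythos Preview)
Your proposal is correct and mirrors the paper's own proof: specialize to $a=b=c=d=1$, identify $\rho$ from $(1-z)^3-4z^2$, compute $a_\infty,b_\infty$, evaluate the quantities of Lemmas~\ref{l:2} and~\ref{l:3} at $h=H=15$, and read off the constants via Corollaries~\ref{cor:1} and~\ref{cor:2}. One small caveat: positivity of $\underline C^{(nat)}$ is not guaranteed abstractly by Theorem~\ref{thm:1} but is a numerical fact that only holds once $h,H$ are large enough---the paper observes that for $1\le h\le 7$ one has $|d_0^{(h,H)}|>|b_\infty|$ and the computed lower-bound constant is negative, so your appeal to Theorem~\ref{thm:1} for this point should be replaced by the direct verification at $h=H=15$.
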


\begin{proof}
For 'natural counting' we have $a = b = c = d = 1$.
From Theorem~\ref{thm:1} we know that the radius of convergence of $L_{0}^{(nat)}(z)$ equals $\rho
= \mathrm{RootOf}\left\{-1 + 3 x + x^2 + x^3\right\} \approx 0.295598\ldots$.
We can also easily get $L_{\infty}(z)\sim a_{\infty}+b_{\infty} \sqrt{1-z/\rho}$ with 
\[
a_{\infty} = \frac{1-\rho}{2\rho} \approx 1.19149\ldots \textrm{ and } 
b_{\infty} = \frac{1}{\rho-1} \sqrt{\frac{1+\rho+\rho^2-\rho^3}{2\rho}} \approx 2.15093\ldots,  
\]
and from the transfer lemmas of~\cite{DBLP:journals/siamdm/FlajoletO90} we obtain 
$\left[z^n\right] L_{\infty}(z) \sim b_{\infty}n^{-3/2}\rho^{-n}/\Gamma(-1/2) 
\simeq (0.606767\ldots) \cdot n^{-\frac32} (3.38298\ldots)^{n}$, as $n \to \infty$.

\begin{table}[h]
\begin{center}
\begin{tabular}{| >{$} c <{$} | >{$} c <{$} | >{$} c <{$} | >{$} c <{$} | >{$} c <{$} |} \hline
h,H & c_0^{(h)} & d_0^{(h)} & c_0^{(h,H)} & d_0^{(h,H)} \\ \hline
1 & 0.855448 & -1.153959 & 1.086200 & -3.803686 \\ \hline
2 & 0.898032 & -1.313246 & 0.979519 & -2.581823 \\ \hline
3 & 0.917305 & -1.397536 & 0.958215 & -2.324953 \\ \hline
4 & 0.927248 & -1.444672 & 0.950295 & -2.236290 \\ \hline
5 & 0.932849 & -1.472308 & 0.946185 & -2.192353 \\ \hline
6 & 0.936128 & -1.488826 & 0.943824 & -2.167379 \\ \hline
7 & 0.938055 & -1.498647 & 0.942443 & -2.152790 \\ \hline \hline
8 & 0.939174 & -1.504385 & 0.941643 & -2.144335 \\ \hline
9 & 0.939813 & -1.507673 & 0.941187 & -2.139511 \\ \hline
10 & 0.940172 & -1.509525 & 0.940931 & -2.136799 \\ \hline
11 & 0.940372 & -1.510556 & 0.940788 & -2.135291 \\ \hline
12 & 0.940482 & -1.511125 & 0.940710 & -2.134460 \\ \hline
13 & 0.940543 & -1.511438 & 0.940667 & -2.134004 \\ \hline
14 & 0.940576 & -1.511608 & 0.940643 & -2.133755 \\ \hline
15 & 0.940594 & -1.511701 & 0.940630 & -2.133619 \\ \hline
\end{tabular}
\end{center}
\caption{Numbers rounded up to $7$ digits.}\label{n:tab:1}
\end{table}

Since we are most interested in the enumeration of closed lambda terms, we examine the
multiplicative constants in the leading term of the asymptotical lower and upper bound for $\left[z^n\right] L_0(z)$. From the formulas in Lemmas~\ref{l:2} and~\ref{l:3} we have computed the values for $c_0^{(h)}, d_0^{(h)}$ and $c_0^{(h,H)}, d_0^{(h,H)}$ for different constants $h$ and $H$ (see Table~\ref{n:tab:1}).

\begin{figure}[ht]
\centering
\includegraphics[width=0.7\textwidth]{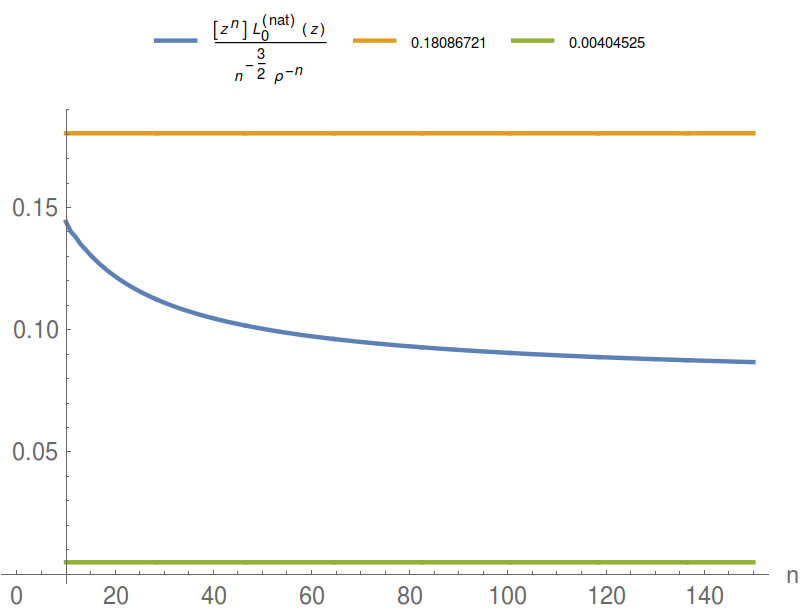}
\caption{Plot of the exact value of $\frac{\left[z^n\right] L_{0}^{(nat)}(z)}{n^{-\frac32} \rho^{-n}}$ and computed lower and upper bound for $h = H = 15$ and $10 \leq n \leq 150$.}\label{fig:1}
\end{figure}

As expected, the bigger $h$ and $H$ are, the more accurate is the bound we get (in this case for $\left[z^n\right] K_0(z)$). 
Taking the values of $d_0^{(h,H)}$ and $d_0^{(h)}$ for $h = H = 15$, Corollaries~\ref{cor:2}
and~\ref{cor:1} yield $\underline{C}^{(nat)} \approx 0.00404525\ldots$ and $\overline{C}^{(nat)} \approx 0.18086721\ldots$.
Notice that using the values of $d_0^{(h,H)}$ to compute $\underline{C}^{(nat)}$ gives 
non-trivial values only for $h > 7$ (for $1 \leq h \leq 7$ we get
negative numbers because in this case we have $\left|d_0^{(h,H)}\right| > 
\left|b_{\infty}\right|$).
\end{proof}

Figure~\ref{fig:1} illustrates the bounds we obtained and the exact values of the coefficients 
$\left[z^n\right] L_{0}^{(nat)}(z)$ for $10 \leq n \leq 150$.
Applying the approach discussed in Section~\ref{sec:imp} for $h = H = M =13$ we get the following
improvement.

\begin{lemma}\label{l:7}
The following bounds hold
\begin{equation}
\liminf_{n \to \infty} \frac{\left[z^n\right] L_{0}^{(nat)}(z)}{\underline{\underline{C}}^{(nat)} n^{-\frac32} \rho^{-n}} \geq 1
\quad \textrm{and} \quad
\limsup_{n \to \infty} \frac{\left[z^n\right] L_{0}^{(nat)}(z)}{\overline{\overline{C}}^{(nat)} n^{-\frac32} \rho^{-n}} \leq 1\,
\end{equation}
where $\rho = \mathrm{RootOf}\{-1+3x+x^2+x^3\} \approx 0.295598\ldots$ and
$\underline{\underline{C}}^{(nat)}, \overline{\overline{C}}^{(nat)}$ are computable constants with
numerical values $\underline{\underline{C}}^{(nat)} \approx 0.07790995266\ldots$ and $\overline{\overline{C}}^{(nat)} \approx 0.07790998229\ldots$.
\end{lemma}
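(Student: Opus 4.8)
The plan is to apply the improvement scheme of Section~\ref{sec:imp} to the natural-counting specialization $a=b=c=d=1$ with parameters $h=H=M=13$, and to read off the constants from the coefficient of the square-root term in the Newton-Puiseux expansions at $\rho$ of the bounding functions $L_{0,M}^{(h)}(z)$ and $L_{0,M}^{(h,H)}(z)$. Since $\rho=\mathrm{RootOf}\{-1+3x+x^2+x^3\}$ and the expansion $L_\infty(z)=a_\infty+b_\infty\sqrt{1-z/\rho}+\BigO{|1-z/\rho|}$ with explicit $a_\infty,b_\infty$ were already obtained in the proof of Lemma~\ref{l:5}, the whole argument reduces to tracking a single square-root coefficient through the finitely many quadratic relations defining $L_{m,M}^{(h)}$ and $L_{m,M}^{(h,H)}$ for $0\le m<M$.

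First I would establish that $L_{0,M}^{(h)}(z)$ and $L_{0,M}^{(h,H)}(z)$ are, respectively, a coefficientwise upper and a coefficientwise lower bound for $L_0(z)$. At the base level $m=M$ this is immediate: $L_{M,M}^{(h)}=L_\infty-K_M^{(h)}\succeq L_\infty-K_M=L_M$ because $K_M^{(h)}\preceq K_M$ (Section~\ref{sectionupper}), and dually $L_{M,M}^{(h,H)}=L_\infty-K_M^{(h,H)}\preceq L_M$ because $K_M^{(h,H)}\succeq K_M$ (Section~\ref{sectionlower}). For $0\le m<M$ both approximants satisfy exactly the recursion~\eqref{mr:eq:5} of $L_m(z)$ with $L_{m+1}$ replaced by the corresponding approximant; solving the resulting quadratic for the branch analytic at the origin expresses each approximant as a power series with nonnegative coefficients that is coefficientwise monotone in the approximant of level $m+1$. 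Hence a downward induction from $m=M$ to $m=0$ propagates the base inequalities to every level, giving $L_{0,M}^{(h,H)}(z)\preceq L_0(z)\preceq L_{0,M}^{(h)}(z)$.

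The core of the proof is the singularity analysis. I would recall from Lemmas~\ref{l:2} and~\ref{l:3} that $K_M^{(h)}(z)$ and $K_M^{(h,H)}(z)$ have a type-$\frac12$ singularity at $\rho$ with explicitly computable coefficients $c_M^{(h)},d_M^{(h)}$ and $c_M^{(h,H)},d_M^{(h,H)}$, so that the base functions $L_{M,M}^{(h)}$ and $L_{M,M}^{(h,H)}$ have expansions at $\rho$ whose square-root coefficients are $b_\infty-d_M^{(h)}$ and $b_\infty-d_M^{(h,H)}$. The main obstacle is to verify that unwinding the nested-radical representations neither creates a singularity strictly inside $|z|<\rho$ nor destroys the type-$\frac12$ nature: at each level $0\le m<M$ one takes a square root of a radicand of the shape $1-4z^{a+d}\frac{1-z^{mb}}{1-z^b}-2z^c+2z^c\sqrt{\cdots}$, and one must check that this radicand stays strictly positive on $[0,\rho]$, exactly as in the non-supercriticality arguments of Lemmas~\ref{l:0} and~\ref{l:2}; for $a=b=c=d=1$ this reduces to a finite positivity verification at $z=\rho$. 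Granting this, the square root is analytic in its argument at $z=\rho$, so it acts on an expansion $a_{m+1}+d_{m+1}(1-z/\rho)^{1/2}+\cdots$ by the chain rule through $\tfrac12(\cdot)^{-1/2}$, giving a recursion $d_m=\kappa_m\,d_{m+1}$ with an explicit positive factor $\kappa_m$. Iterating this $M$ times from the base coefficients $b_\infty-d_M^{(h)}$ and $b_\infty-d_M^{(h,H)}$ produces the two target coefficients $\overline{\overline d}=d_{0,M}^{(h)}$ and $\underline{\underline d}=d_{0,M}^{(h,H)}$.

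Finally, applying the transfer lemmas of~\cite{DBLP:journals/siamdm/FlajoletO90} to the two bounding functions gives $[z^n]L_{0,M}^{(h)}(z)\sim\overline{\overline d}\,n^{-3/2}\rho^{-n}/\Gamma(-1/2)$ and the analogous estimate for $L_{0,M}^{(h,H)}(z)$, and combining these with the coefficientwise inequalities $L_{0,M}^{(h,H)}(z)\preceq L_0(z)\preceq L_{0,M}^{(h)}(z)$ yields the claimed $\liminf$ and $\limsup$ with $\overline{\overline{C}}^{(nat)}=\overline{\overline d}/\Gamma(-1/2)$ and $\underline{\underline{C}}^{(nat)}=\underline{\underline d}/\Gamma(-1/2)$, exactly as in Corollaries~\ref{cor:1} and~\ref{cor:2}. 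Evaluating these closed forms numerically at $h=H=M=13$ gives $\underline{\underline{C}}^{(nat)}\approx0.07790995266$ and $\overline{\overline{C}}^{(nat)}\approx0.07790998229$, which completes the proof.
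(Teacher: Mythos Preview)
Your proposal is correct and follows essentially the same approach as the paper, which simply states that Lemma~\ref{l:7} is obtained by ``applying the approach discussed in Section~\ref{sec:imp} for $h = H = M = 13$''. You supply the details the paper leaves implicit: the downward induction showing $L_{0,M}^{(h,H)}\preceq L_0\preceq L_{0,M}^{(h)}$, the propagation of the type-$\frac12$ singularity through the nested radicals (with the positivity check at each level), and the transfer step, all of which are exactly what the improvement scheme of Section~\ref{sec:imp} amounts to once unpacked.
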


\begin{figure}[ht]
\centering
\includegraphics[width=0.7\textwidth]{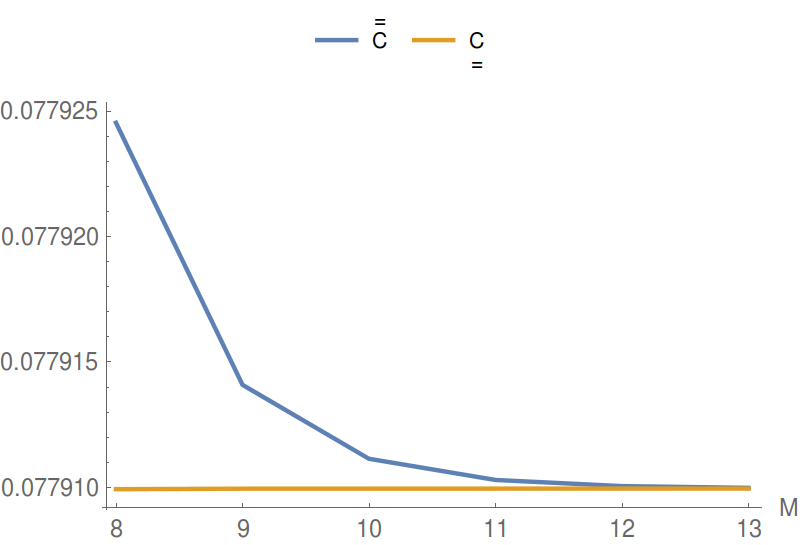}
\caption{Plot of the numerical values for the constants $\underline{\underline{C}}^{(nat)}, \overline{\overline{C}}^{(nat)}$ for $8 \leq h = H = M \leq 13$.}\label{fig:2}
\end{figure}

Figure~\ref{fig:2} illustrates how the improvement discussed in Section~\ref{sec:imp} allows to
reduce the gap between the constants $\underline{\underline{C}}^{(nat)},
\overline{\overline{C}}^{(nat)}$ for the lower and the upper bound.

\subsection{Binary lambda calculus}\label{sec:bin}

\begin{lemma}\label{l:8}
The following bounds hold
\begin{equation}
\liminf_{n \to \infty} \frac{\left[z^n\right] L_{0}^{(bin)}(z)}{\underline{\underline{C}}^{(bin)} n^{-\frac32} \rho^{-n}} \geq 1
\quad \textrm{and} \quad
\limsup_{n \to \infty} \frac{\left[z^n\right] L_{0}^{(bin)}(z)}{\overline{\overline{C}}^{(bin)} n^{-\frac32} \rho^{-n}} \leq 1\,
\end{equation}
where $\rho = \mathrm{RootOf}\{-1+x+2x^2-2x^3+3x^4+x^5\} \approx 0.509308\ldots$ and
$\underline{\underline{C}}^{(bin)}, \overline{\overline{C}}^{(bin)}$ are computable constants with
numerical values $\underline{\underline{C}}^{(bin)} \approx 0.01252417\ldots$ and $\overline{\overline{C}}^{(bin)} \approx 0.01254593\ldots$.
\end{lemma}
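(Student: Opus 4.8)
The plan is to specialize the general apparatus developed above — Theorem~\ref{thm:1} together with the bound-sharpening technique of Section~\ref{sec:imp} — to the parameters of the binary lambda calculus, in exact analogy with the natural-counting case treated in Lemmas~\ref{l:5} and~\ref{l:7}.

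First I would record the constants: for the binary lambda calculus we have $b=1$ and $a=c=d=2$, hence $a+d=4$. These satisfy Assumption~\ref{i:ass:1}, since in particular $\gcd(b,c,a+d)=\gcd(1,2,4)=1$, so all preceding results apply. Substituting into Proposition~\ref{i:prop:1}, the defining polynomial becomes $(1-z)(1-z^2)^2-4z^4$, whose negation is $-1+z+2z^2-2z^3+3z^4+z^5$; its smallest positive root is the asserted $\rho\approx 0.509308\ldots$. I would then compute the Newton--Puiseux data $a_\infty=L_\infty(\rho)$ and $b_\infty$ of $L_\infty(z)$ at $\rho$ from the explicit solution of~\eqref{mr:eq:1}, a finite algebraic computation once $\rho$ is fixed.

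With $\rho$, $a_\infty$ and $b_\infty$ in hand, the raw upper bound follows from Lemma~\ref{l:2} and Corollary~\ref{cor:1} by evaluating $c_0^{(h)}$ and $d_0^{(h)}$ at the binary parameters, and the raw lower bound follows symmetrically from Lemma~\ref{l:3} and Corollary~\ref{cor:2} via $c_0^{(h,H)}$ and $d_0^{(h,H)}$. Since these leave a sizeable gap (in the natural case one even needs $h>7$ merely to make $\underline C$ positive), I would invoke the refinement of Section~\ref{sec:imp}: with the functions $L_{0,M}^{(h)}(z)$ and $L_{0,M}^{(h,H)}(z)$, which retain the exact recursion for $L_m(z)$ up to level $M$ and replace only $L_M(z)$ by its upper, respectively lower, surrogate, taken at a common truncation $h=H=M$ as large as is computationally feasible. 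Extracting the coefficient of $\left(1-z/\rho\right)^{1/2}$ from the associated nested-radical representations and passing through the transfer lemmas of~\cite{DBLP:journals/siamdm/FlajoletO90} then yields the refined constants $\underline{\underline{C}}^{(bin)}$ and $\overline{\overline{C}}^{(bin)}$ with the stated values.

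The obstacle here is computational rather than conceptual. Every structural ingredient carries over verbatim from the natural-counting proof: the singularity analysis, the transfer step, and the non-supercriticality check, which even simplifies since $1-\rho^{b+c}-2\rho^d a_\infty=\rho^c(1-\rho^b)>0$. What changes is that with $b=1$ and $a=c=d=2$ the intermediate expressions (the $r_j(z)$ in~\eqref{nestedrad}, the factors $\tilde c_i,\tilde d_i$, and the nested radicals of Section~\ref{sec:imp}) are bulkier, so the real work is to carry the exact-arithmetic evaluation of $d_0^{(h)}$ and $d_0^{(h,H)}$ to a truncation level high enough that $\underline{\underline{C}}^{(bin)}$ and $\overline{\overline{C}}^{(bin)}$ agree to the claimed precision. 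One must also check that the chosen $h=H=M$ already lies in the regime $\bigl|d_0^{(h,H)}\bigr|<|b_\infty|$, guaranteeing a genuinely positive lower-bound constant.
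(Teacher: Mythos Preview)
Your proposal is correct and follows exactly the paper's approach: specialize the general framework to $b=1$, $a=c=d=2$, and compute the constants via the improved bounds $L_{0,M}^{(h)}(z)$ and $L_{0,M}^{(h,H)}(z)$ of Section~\ref{sec:imp}. The paper's own proof is a two-line remark that does precisely this with the concrete choice $h=H=M=13$; your write-up simply spells out the intermediate verifications in more detail.
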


In order to proof this lemma it is enough to recall that in case of binary lambda calculus the
size defining constants are $b = 1$ and $a = c = d = 2$. Then we used the functions
$L_{0,13}^{13,13}(z), L_{0,13}^{13,13}(z)$ to obtain the numerical constants stated in
Lemma~\ref{l:8}.

\bibliography{bibliography}

\begin{thebibliography}{1}

\bibitem{DBLP:journals/corr/BendkowskiGLZ15}
Maciej Bendkowski, Katarzyna Grygiel, Pierre Lescanne, and Marek Zaionc.
\newblock A natural counting of lambda terms.
\newblock {\em CoRR}, abs/1506.02367, 2015.

\bibitem{DBLP:conf/analco/BodiniGG11}
Olivier Bodini, Dani{\`{e}}le Gardy, and Bernhard Gittenberger.
\newblock Lambda-terms of bounded unary height.
\newblock In Philippe Flajolet and Daniel Panario, editors, {\em Proceedings of
  the Eighth Workshop on Analytic Algorithmics and Combinatorics, {ANALCO}
  2011, San Francisco, California, USA, January 22, 2011}, pages 23--32.
  {SIAM}, 2011.

\bibitem{deBruijn:IndMath:1972}
N.~G. de~Bruijn.
\newblock Lambda calculus notation with nameless dummies, a tool for automatic
  formula manipulation, with application to the {C}hurch-{R}osser theorem.
\newblock {\em Nederl. Akad. Wetensch. Proc. Ser. A {\bf 75}=Indag. Math.},
  34:381--392, 1972.

\bibitem{DBLP:journals/siamdm/FlajoletO90}
Philippe Flajolet and Andrew~M. Odlyzko.
\newblock Singularity analysis of generating functions.
\newblock {\em {SIAM} J. Discrete Math.}, 3(2):216--240, 1990.

\bibitem{Flajolet:2009:AC:1506267}
Philippe Flajolet and Robert Sedgewick.
\newblock {\em Analytic Combinatorics}.
\newblock Cambridge University Press, New York, NY, USA, 1 edition, 2009.

\bibitem{GLBinLT}
Katarzyna Grygiel and Pierre Lescanne.
\newblock Counting terms in the binary lambda calculus.
\newblock In {\em DMTCS. 25th International Conference on Probabilistic,
  Combinatorial and Asymptotic Methods for the Analysis of Algorithms}.
  Discrete Mathematics \& Theoretical Computer Science, Jun 2014.

\bibitem{tromp:DSP:2006:628}
John Tromp.
\newblock Binary lambda calculus and combinatory logic.
\newblock In Marcus Hutter, Wolfgang Merkle, and Paul~M.B. Vitanyi, editors,
  {\em Kolmogorov Complexity and Applications}, number 06051 in Dagstuhl
  Seminar Proceedings, Dagstuhl, Germany, 2006. Internationales Begegnungs- und
  Forschungszentrum f{\"u}r Informatik (IBFI), Schloss Dagstuhl, Germany.

\end{thebibliography}
\bibliographystyle{plain}

\newpage 

\section*{Appendix}

Below we present the proof of Lemma~\ref{l:3}.

\begin{proof}
Throughout the proof we will make use of Equation~(\ref{l:eq:2}).
Let us first focus on the finite sum: $\frac{z^{a-cm}}{1-z^b} \sum_{j=m}^{H-1} z^{j(b+c)} \prod_{i=m}^{j} \frac{1}{1 - z^d \left(2 L_{\infty}(z) - K_i^{(h)}(z)\right)}$.
The key part of it $\frac{1}{1 - z^d \left(2 L_{\infty}(z) - K_i^{(h)}(z)\right)}$ is the generating function of a sequence of structures enumerated by the denominator $z^d \left(2 L_{\infty}(z) -K_i^{(h)}(z)\right)$.
Like in case of Equation~(\ref{u:eq:8}), one can check that we are not in the supercritical sequence schema case because
\begin{align*}
1 - \rho^d \left(2 L_{\infty}(\rho) - K_i^{(h)}(\rho)\right) & > 0 \\
\rho^d \left(2 L_{\infty}(\rho) - K_i^{(h)}(\rho)\right) \leq 2 \rho^d L_{\infty}(\rho) = 1 -
\rho^c & < 1.
\end{align*}
Second inequality holds because $K_i^{(h)}(\rho) > 0$ for all $i \geq 0$.
The last inequality holds because we have $\rho > 0$ for all $a,b,c,d$ that satisfies Assumption~\ref{i:ass:1}.
Therefore, for all $i$ from the Newton-Puiseux expansion of $L_{\infty}(z)$ and $K_i^{(h)}(z)$ at singularity $\rho$ (see Proposition~\ref{i:prop:1} and Lemma~\ref{l:2}) we have :
\begin{equation}\label{l:eq:6}
\frac{1}{1 - z^d \left(2 L_{\infty}(z) - K_i^{(h)}(z)\right)} = \dot{c}_{i} + \dot{d}_{i} \left(1-\frac{z}{\rho}\right)^{\frac12} + \BigO{1-\frac{z}{\rho}},
\end{equation}
where $\dot{c}_i = \frac{1}{1 - \rho^d \left(2 a_{\infty} - c_i^{(h)}\right)}, \dot{d}_i = \frac{\rho^d \left(2 b_{\infty} - d_i^{(h)}\right)}{\left(1 - \rho^d \left(2 a_{\infty} - c_i^{(h)}\right)\right)^2}$.
Moreover, we have
\[
\prod_{i=m}^{j} \frac{1}{1 - z^d (2 L_{\infty}(z) - K_i^{(h)}(z))} = \dot{c}_{m,j} + \dot{d}_{m,j} \left(1-\frac{z}{\rho}\right)^{\frac12} + \BigO{1-\frac{z}{\rho}},
\]
where $\dot{c}_{m,j} = \prod_{i=m}^{j} \dot{c}_i, \dot{d}_{m,j} = \sum_{i=m}^{j} \frac{\dot{d}_i}{\dot{c}_i} \prod_{k=m}^{j} \dot{c}_{k}$.
Since $\left(1-\frac{z}{\rho}\right)^{\frac12}$ does not depend on $m$ and $j$ and $\frac{z^{a - cm}}{1 - z^b}$ have poles only on the unit circle, we have for all $m \geq 0$ the following
\begin{equation*}
\frac{z^{a-cm}}{1-z^b} \sum_{j=m}^{H-1} z^{j(b+c)} \prod_{i=m}^{j} \frac{1}{1 - z^d \left(2 L_{\infty}(z) -K_i^{(h)}(z)\right)} = \ddot{c_m} + \ddot{d_m} \left(1-\frac{z}{\rho}\right)^{\frac12} + \BigO{1-\frac{z}{\rho}},
\end{equation*}
where $\ddot{c_m} = \frac{\rho^{a - c m}}{1-\rho^b} \sum_{j=m}^{H} \rho^{j(b+c)} \dot{c}_{m,j}, \ddot{d_m} = \frac{\rho^{a - c m}}{1-\rho^b} \sum_{j=m}^{H} \rho^{j(b+c)} \dot{d}_{m,j}$.

In case of the remainder: $\frac{z^{a + bH + c (H-m)}}{\left(1-z^d\right) (1-z^{b+c}-2z^d
L_{\infty}(z))} \left(\prod_{i=m}^{H-1} \frac{1}{1 - z^d \left(2 L_{\infty}(z)
-K_i^{(h)}(z)\right)} \right)$ we proceed similarly to the proof of Lemma~\ref{l:2}.
We already know how to handle the product part of this expression, so let us consider the fraction: $\frac{z^{a + b H + c (H-m)}}{\left(1-z^d\right) (1-z^{b+c}-2z^d L_{\infty}(z))}$.
Similarly to before, we have to check that the singularity of this function does not come from the root of the denominator but from the function $L_{\infty}(z)$.
So the following inequality has to hold:
\begin{align*}
1 - \rho^{b+c} - 2 \rho^{d} L_{\infty}(\rho) &> 0 \\
2 \rho^d L_{\infty}(\rho) = 1 - \rho^c & <  1 - \rho^{b+c}.
\end{align*}
Second inequality holds because we have $0 < \rho^{b+c} < \rho^{c}$ for all $a,b,c,d$ that satisfies Assumption~\ref{i:ass:1}.
Now, similarly to the previous case using the Newton-Puiseux expansion of $L_{\infty}(z)$ at $\rho$ we can derive expansion of this part of Equation~(\ref{l:eq:2}).
\end{proof}

\end{document}